\newtheorem{thm}{Theorem}
\newtheorem{prop}[thm]{Proposition}
\newtheorem{lemma}[thm]{Lemma}
\newtheorem{proposition}[thm]{Proposition}
\newtheorem{question}[thm]{Question}
\newtheorem*{mthma}{Theorem~A}
\newtheorem*{mthmb}{Theorem~B}
\newtheorem*{mthmc}{Theorem~C}
\theoremstyle{definition}
\newtheorem*{definition*}{Definition}
\newtheorem{remark}[thm]{Remark}
\newcommand{\CPb}{\overline{\mathbb{CP}}{}^{2}}
\newcommand{\CP}{{\mathbb{CP}}{}^{2}}
\newcommand{\Q}{\mathbb{Q}}
\newcommand{\N}{\mathbb{N}}
\newcommand{\Z}{\mathbb{Z}}
\def \bdry {\partial}
\newcommand{\K}{{\rm K3}}
\def \x {\times}
\def \eu{{\text{e}}}
\def \h  {{\textsf{L}}}
\def \hT {\widetilde{{\textsf{{L}}}}}
\begin{document}

\title[Positive factorizations of mapping classes]
{Positive factorizations of mapping classes}

\author[R. \.{I}. Baykur]{R. \.{I}nan\c{c} Baykur}
\address{Department of Mathematics and Statistics, University of Massachusetts, Amherst, MA 01003-9305, USA}
\email{baykur@math.umass.edu}

\author[N. Monden]{Naoyuki Monden}
\address{Department of Engineering Science, Osaka Electro-Communication University, Hatsu-cho 18-8, Neyagawa, 572-8530, Japan}
\email{monden@isc.osakac.ac.jp}

\author[J. Van Horn-Morris]{Jeremy Van Horn-Morris}
\address{Department of Mathematical Sciences, The University of Arkansas, \newline 
\indent Fayetteville, AR 72701,  USA }
\email{jvhm@uark.edu }


\begin{abstract}
In this article, we study the maximal length of positive Dehn twist factorizations of surface mapping classes. In connection to fundamental questions regarding the uniform topology of symplectic $4$-manifolds and Stein fillings of contact $3$-manifolds coming from the topology of supporting Lefschetz pencils and open books, we completely determine which boundary multitwists admit arbitrarily long positive Dehn twist factorizations along nonseparating curves, and which mapping class groups contain elements admitting such factorizations. Moreover, for every pair of positive integers $g,n$, we tell whether or not there exist genus-$g$ Lefschetz pencils with $n$ base points, and if there are, what the maximal Euler characteristic is whenever it is bounded above. We observe that only symplectic $4$-manifolds of general type can attain arbitrarily large topology regardless of the genus and the number of base points of Lefschetz pencils on them.
\end{abstract}

\maketitle

\setcounter{secnumdepth}{2}
\setcounter{section}{0}

\section{Introduction} 

Let $\Sigma_g^n$ be a compact orientable genus $g$ surface with $n$ boundary components, and $\Gamma_g^n$ denote the mapping class group composed of orientation-preserving homeomorphisms of $\Sigma_g^n$ which restrict to identity along $\partial \Sigma_g^n$, modulo isotopies fixing the same data. We denote by $t_c \in \Gamma_g^n$, the positive (right-handed) Dehn twist along the simple closed curve $c \subset \Sigma_g^n$. If $\Phi= t_{c_l} \cdots t_{c_1}$ in $\Gamma_g^n$, where $c_i$ are nonseparating curves on $\Sigma_g^n$, we call the product of Dehn twists a \emph{positive factorization} of $\Phi$ in $\Gamma_g^n$ of \emph{length $l$}. 

Our motivation to study positive factorizations come from their significance in the study of Stein fillings of contact $3$-manifolds \cite{Giroux, LP} and that of symplectic $4$-manifolds \cite{D} via Lefschetz fibrations and pencils. Provided all the twists are along homologically essential curves, a positive factorization of a mapping class in $\Gamma_g^n$, $n  \geq 1$, prescribes an \textit{allowable Lefschetz fibration} which supports a Stein structure filling the contact structure compatible with the induced genus $g$ open book on the boundary with $n$ binding components \cite{LP, AO}. Similarly, a positive factorization of a boundary multitwist ${\Delta}$, i.e. the mapping class $t_{\delta_1} \cdots t_{\delta_n}$ for $\delta_i$ boundary parallel curves, describes a genus $g$ \textit{Lefschetz pencil} with $n$ base points. Conversely, given any allowable Lefschetz fibration or a Lefschetz pencil, one obtains such a factorization.

\enlargethispage{0.2in}
The main questions we will take on in this article are the following: 

\noindent \textbf{Q1} \textit{When does the page $F \cong \Sigma_g^n$ of an open book impose an a priori bound on the Euler characteristics of allowable Lefschetz fibrations with regular fiber $F$ filling it?} 

\vspace{0.1in}
\noindent \textbf{Q2} \textit{When does the fiber $F \cong \Sigma_g$ and a positive integer $n$ imply an a priori bound on the Euler characteristics of allowable Lefschetz pencils with regular fiber $F$ and $n$ base points? When it does, what is the largest possible Euler characteristic? More specifically,  for which $g,n$ do there exist genus-$g$ Lefschetz pencils with $n$ base points?} 

By Giroux, contact structures on $3$-manifolds up to isotopies are in one-to-one correspondence with supporting open books up to positive stabilizations \cite{Giroux}. Moreover, a contact $3$-manifold $(Y, \xi)$ admits a Stein filling $(X, J)$ if and only if $(Y, \xi)$ admits a \textit{positive open book}, i.e. an open book whose monodromy can be factorized into positive Dehn twists along homologically essential curves on the page \cite{LP}. Since $b_1(X) \leq b_1(Y)$ for any Stein filling $X$, the Question~$1$ above, \textit{up to stabilizations}, amounts to asking when the page of an open book (i.e. its genus $g$ and number of binding components $n$) on a contact $3$-manifold implies a uniform bound on the topology of its Stein fillings. On the other hand, the Question~$2$ can be seen as a special version of Question~$1$ with page $\Sigma_g^n$ and the particular monodromy $\Delta$.
  
Related to our focus is the following natural function defined on mapping class groups: The \emph{positive factorization length}, or \emph{length} (as \emph{the} length, in this paper) of a mapping class $\Phi$, which we denote by $\h(\Phi)$, is defined to be the supremum taken over the lengths of all possible positive factorizations of $\Phi$ along nonseparating curves, and it is $-\infty$ if $\Phi$ does not admit any positive factorization. $\h$ is a superadditive function on $\Gamma_g^n$ taking values in $\N \, \cup \{\pm \infty \}$. It is easy to see that $\h < + \infty$ translates to having the uniform bound in the above questions. 

We will investigate the length of various mapping classes, leading to a surprisingly diverse picture, where our results, will in particular provide complete solutions to the above problems. Below, $\delta_i$ always denote boundary parallel curves along distinct boundary components of $\Sigma_g^n$ for $i=1, \ldots, n$. 

\smallskip
\begin{mthma} 
Let $\Delta = t_{\delta_1} \cdots t_{\delta_n}$ on $\Sigma_g^{n}$, where $n  \geq  1$
\footnote{As we will review while proving the above theorem, it is well-known that for $\Delta = t_{\delta_1} \cdots t_{\delta_n}$ on $\Sigma_g^{n'}$ with $n < n'$, we have $\h(\Delta)= -\infty$. We have therefore expressed our results only for the nontrivial case $n=n'$.} 
Then 
\begin{equation*}
\h(\Delta) = \left\{\begin{array}{rl}   -\infty & \mbox{       \,\,\,      if } g=1 \mbox{ and } n> 9 \mbox{, or } g \geq 2 \mbox{ and } n> 4g+4 \\
+\infty & \mbox{       \,\,\,       if } n \leq 2g-4 \\ 
\mbox{finite} & \mbox{   \,\,\,          otherwise.} \end{array}\right.
\end{equation*}
\emph{When finite}, the exact value of $\h(\Delta)$ is 
\begin{equation*}
\h(\Delta) = \left\{\begin{array}{rl}  12 & \mbox{     if } g=1 \\
40 & \mbox{     if } g=2 \\ 
6g+18 & \mbox{    if }3 \leq g \leq 6 \, \, \, \, \, \, \, \, \, \, \, \, \, \, \, \, \, \, \, \, \, \, \, \, \, \, \, \, \, \, \, \, \, \, \, \, \, \, \, \, \, \, \, \, \, \, \, \, \, \, \, \, \, \, \, \, \, \, \, \, \, \, \, \, \, \, \, \, \, \, \, \, \, \, \, \, \, \, \,   \, \, \, \, \, \\ 
8g+4 & \mbox{    if }g \geq 7. \end{array}\right.
\end{equation*}
In particular, when $\h(\Delta)$ is finite, its value depends solely on $g$, and not $n$.
\end{mthma}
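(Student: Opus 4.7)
The plan is to reformulate positive factorizations of $\Delta$ as genus-$g$ Lefschetz pencils. A length-$l$ positive factorization of $\Delta = t_{\delta_1}\cdots t_{\delta_n}$ along nonseparating curves corresponds to a pencil with $n$ base points on some symplectic $4$-manifold $X$ whose blow-up $X' \to S^2$ is an allowable Lefschetz fibration with $\chi(X') = 4-4g+l$, hence $\chi(X) = 4-4g+l-n$, and whose $n$ base-point sections are disjoint $-1$-spheres in $X'$. Standard constraints on closed symplectic $4$-manifolds -- adjunction, signature bounds for Lefschetz fibrations (in the spirit of Stipsicz and Smith), and slope or Noether-type inequalities for fibered surfaces -- then translate into upper bounds on $\h(\Delta)$, while explicit Lefschetz pencils give matching lower bounds.

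For the claim $\h(\Delta)=-\infty$, when $g=1$ any genus-$1$ Lefschetz fibration over $S^2$ is, by Moishezon--Livne, a member of the $E(k)$ family, and the existence of $n$ disjoint $-1$-sphere sections forces $n \leq 9$ (with equality only for $E(1)$ and $l=12$). When $g \geq 2$ and $n > 4g+4$, combining a signature estimate for the Lefschetz fibration on $X'$ with the positivity $c_1^2(X_{\min}) \geq 0$ for the minimal model of $X$ yields a contradiction with the two formulas $\chi(X) = 4-4g+l-n$ and the corresponding signature expression.

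For $\h(\Delta)=+\infty$ when $n \leq 2g-4$, I would construct unbounded families of positive factorizations by a breeding procedure: starting from a short factorization of $\Delta$ coming from a pencil on a rational or ruled surface, iteratively apply local monodromy substitutions (generalized lantern, daisy, or substitutions derived from fiber-summing with Matsumoto-type fibrations), each of which replaces a product of Dehn twists by a strictly longer positive product along nonseparating curves. The condition $n \leq 2g-4$ is precisely what leaves enough room on the page $\Sigma_g^n$ to embed the subsurfaces supporting these substitutions without producing separating vanishing cycles.

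For the exact finite values, the upper bounds follow from optimizing the available characteristic class inequalities against $\chi(X) = 4-4g+l-n$ and the signature formula for Lefschetz fibrations: the value $12$ for $g=1$ is essentially the elliptic case; $40$ for $g=2$ comes from the hyperelliptic slope inequality of Xiao; and $6g+18$ for $3\leq g \leq 6$ and $8g+4$ for $g \geq 7$ arise from two competing Noether- and BMY-type inequalities for surfaces of general type, with the regime change at $g=7$ marking the point where the tighter inequality switches. The matching lower bounds come from explicit Lefschetz pencils -- the pencil of cubics on $\CP$ for $g=1$, a hyperelliptic pencil for $g=2$, and pencils on Horikawa-type surfaces of general type for the higher-genus values. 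The main obstacle I anticipate is the sharpness of these bounds in the transitional range $3 \leq g \leq 6$, where $X$ is typically non-minimal and one must carefully combine several competing inequalities together with the constraint that the $n$ sections are $-1$-spheres before concluding $l \leq 6g+18$.
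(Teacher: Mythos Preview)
Your overall translation to Lefschetz pencils is correct, and the $g=1$ case and the infinite-length constructions are in the right spirit. But there is a genuine conceptual error in how you obtain the finite values for $g \geq 2$.

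You attribute the bounds $6g+18$ and $8g+4$ to Noether- and BMY-type inequalities for surfaces of general type, and propose that the extremal pencils live on Horikawa-type surfaces. This is backwards. The key step in the paper (using Sato's analysis of the canonical class of non-minimal Lefschetz fibrations together with the Seiberg--Witten adjunction inequality) is that once $n \geq 2g-3$, the symplectic Kodaira dimension of the pencil $4$-manifold is forced to be at most $0$, with the sole exception of the case $g=2$, $n=1$ handled separately by Smith. Surfaces of general type are therefore \emph{excluded} in the finite range, so Noether and BMY inequalities never enter. The value $8g+4$ is the length of the extremal pencil on the rational surface $\CP \# (4g+5)\CPb$ (the upper bound coming from Stipsicz's inequality $4(b_1-g)+b_2^- \leq 5b_2^+$ for Lefschetz fibrations with $b_2^+=1$), while $6g+18$ is the length of the extremal pencil on a $\K$ surface blown up $2g-2$ times. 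The regime change at $g=7$ is simply where $8g+4$ overtakes $6g+18$, not a switch between slope inequalities.

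Relatedly, your argument for $n > 4g+4$ via ``$c_1^2(X_{\min}) \geq 0$'' cannot work as stated: in that range the adjunction inequality already forces $X$ to be rational or ruled, where $c_1^2$ of the minimal model can be negative. The paper instead uses the identification $X \cong S^2 \times \Sigma_h \# m\,\CPb$, combines Stipsicz's inequality with the count of exceptional sections to force $h=0$ and $m=4g+5$, and then rules out that last case by a direct adjunction computation for the fiber class on $\CP$.
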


\smallskip
Translating this to Lefschetz pencils; for every pair of fixed positive integers $g, n$, Theorem~A allows us to tell (i) if there are any symplectic $4$-manifolds admitting a genus-$g$ Lefschetz pencil with $n$ base points (and with only irreducible singular fibers), (ii) when the Euler characteristic of these $4$-manifolds can get arbitrarily large, and (iii) if bounded, what the largest Euler characteristic exactly is. Parts (i) and (ii) completely answer the Question~5.1 in \cite{DKP}. In the course of the proof, we will note that only symplectic $4$-manifolds of general type, i.e. of Kodaira dimension $2$, realize arbitrarily large Euler characteristic. In contrast, when there is a uniform bound, we will see that the largest Euler characteristic can be realized by a symplectic $4$-manifold of Kodaira dimension $-\infty, 0$ or $2$. 

Our second theorem, inspired by partial observations in \cite{DKP}, shows that when the Question~1 is formulated for higher powers of the boundary multitwist $\Delta$ (i.e. for Lefschetz fibrations with sections of self-intersection less than $-1$ instead of pencils) or for the product of $\Delta$ with a single Dehn twist along a nonseparating curve, the uniform bounds in Theorem~A disappear for all $g \geq 2$.

\smallskip
\begin{mthmb} 
Let $\Delta= t_{\delta_1} \cdots t_{\delta_n}$ on $\Sigma_g^n$, where $n \geq 1$ any integer, and $a$ be any nonseparating curve on $\Sigma$. Then 
\begin{enumerate}
\item $\h(\Delta^k)=12k$ \ \, \, if $g=1$, $k \geq 1$, and $n \leq 9$ \
\item $\h(\Delta^k)=+\infty$ \, \, if $g \geq 2$, $k \geq 2$, \
\item $\h(\Delta \, t_a)= + \infty$ \, if $g \geq 2$.
\end{enumerate}
\end{mthmb}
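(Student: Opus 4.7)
The three statements split naturally. Part (1) asks for an exact length, for which I plan to match upper and lower bounds. Parts (2) and (3) ask only for the existence of arbitrarily long positive factorizations, which I plan to produce via explicit Lefschetz fibration constructions.

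For part (1), I will use the rotation number homomorphism $\psi \colon \Gamma_1^n \to \Z$, obtained by capping $n-1$ boundary components of $\Sigma_1^n$ and composing with the standard abelianization $\Gamma_1^1 \cong B_3 \to \Z$. A nonseparating simple closed curve on $\Sigma_1^n$ remains nonseparating after capping, so $\psi$ assigns value $1$ to each nonseparating Dehn twist. Since the classical $(t_a t_b)^6 = t_\delta$ relation gives $\psi(t_\delta)=12$, we obtain $\psi(\Delta)=12$ (only the uncapped boundary contributes) and hence $\psi(\Delta^k)=12k$. Therefore every positive factorization of $\Delta^k$ in $\Gamma_1^n$ along nonseparating curves has length exactly $12k$, establishing both inequalities once I exhibit one such factorization. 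For existence, I will take the classical length-$12$ factorization of $\Delta$ in $\Gamma_1^9$ coming from the genus-$1$ pencil on $\CP$, restrict/merge base points for $1 \leq n < 9$, and raise it to the $k$-th power, recovering the monodromy factorization of the elliptic fibration $E(k) \to S^2$.

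For part (2), my plan is to produce, for each $g \geq 2$, $k \geq 2$, and $n \geq 1$, a sequence of positive factorizations $\{P_m\}$ of $\Delta^k$ in $\Gamma_g^n$ with $|P_m| \to \infty$. I will begin from a seed factorization of $\Delta^k$---say the $k$-fold concatenation of a pencil factorization of $\Delta$ from Theorem~A---and lengthen it by inserting positive words $R_m$ that equal the identity in $\Gamma_g^n$ but have arbitrarily large length. The geometric source of such $R_m$ is a closed genus-$g$ Lefschetz fibration carrying $n$ disjoint sections of self-intersection $0$: removing tubular neighborhoods of the sections yields a positive factorization in $\Gamma_g^n$ of $t_{\delta_1}^0\cdots t_{\delta_n}^0 = 1$. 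Since fiber sum adds section self-intersections, iterating the fiber sum of one such base fibration with itself keeps the sections at self-intersection $0$ while making the length grow without bound. Inserting $R_m$ into the seed produces the desired family $\{P_m\}$. The hypothesis $k \geq 2$ is what makes this possible: at $k=1$, the pencil constraint forces the finiteness established in Theorem~A, while $k \geq 2$ decouples the boundary multitwist from the section rigidity.

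For part (3), I will run the analogous scheme with $\Delta\,t_a$ in place of $\Delta^k$, starting from a pencil factorization of $\Delta$ appended by $t_a$ and then inserting the same trivially-lifted long relators. The main obstacle in both (2) and (3) is the existence of the auxiliary closed genus-$g$ Lefschetz fibrations with $n$ disjoint $0$-sections (and, at least one realization of them, with arbitrarily large Euler characteristic). I plan to build these from small-genus building blocks---for example Matsumoto's genus-$2$ positive relator and its higher-genus analogs, together with known pencils from algebraic geometry---using symplectic sum and breeding operations to assemble the required $0$-section configurations for every $g \geq 2$ and $n \geq 1$.
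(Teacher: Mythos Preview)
Your treatment of part~(1) is correct and coincides with the paper's argument: cap to $\Gamma_1^1$, use the abelianization to $\Z$ (each nonseparating twist maps to $1$, and $t_\delta$ to $12$ via $(t_at_b)^6=t_\delta$), and realize length $12k$ by the monodromy of $E(k)$.

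Your plan for parts~(2) and~(3), however, rests on a false premise. You propose to lengthen a seed factorization by inserting positive words $R_m$ in $\Gamma_g^n$, $n\geq 1$, that equal the identity and have arbitrarily large length. No such $R_m$ exist: this is exactly Proposition~\ref{nonpositive} (via Proposition~\ref{prop:dehn pos}). The identity fixes every properly embedded arc, so by right-veering every curve appearing in a positive factorization of $1$ must be disjoint from every arc, forcing the factorization to be empty; equivalently $\h(1)=0$. Geometrically, your proposed source---a genus-$g$ Lefschetz fibration over $S^2$ with $n$ disjoint sections of square~$0$ and at least one Lefschetz critical point---cannot exist for this very reason. (The square-$0$ sections that do appear in the literature, e.g.\ in \cite{BKM}, sit in fibrations over a \emph{torus} base, where the monodromy relation expresses a \emph{commutator} as a positive word, not the identity; inserting a commutator relator destroys positivity.) A secondary gap: your seed factorizations presuppose a pencil factorization of $\Delta$, which by Theorem~A exists only for $n\leq 4g+4$, whereas parts~(2) and~(3) are claimed for all $n\geq 1$.

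The paper's route is therefore genuinely different and not bypassable by an insertion trick. For each $m$ the authors construct a \emph{new} factorization via explicit braid, chain, and daisy manipulations. The engine is the pair of relations~(\ref{DKPrelation0})--(\ref{DKPrelation1}) in $\Gamma_2^2$, which embed the unbounded positive word $T_{10m}$ into factorizations of $\phi_{12}$ and of $t_\delta t_{\delta'} t_{c_4}$. These are transported to $\Gamma_g^n$ for all $g\geq 2$ and $n\geq 1$ using Lemma~\ref{sections1} (a daisy substitution trading a boundary multitwist for interior nonseparating twists) together with chain relations; for $\Delta^k$ an additional conjugation by a mapping class exchanging two disjoint nonseparating curves is used to cancel the stray $t_{c_4}$ (or $t_{d'}$) factors. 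You will need to engage with these explicit relations rather than the insertion shortcut.
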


A simple variation of $\h$ is obtained by allowing separating Dehn twists along \textit{homologically essential} curves on $\Sigma_g^n$ in the factorizations, which we denote by $\hT$. Clearly $\h(\Phi) \leq \hT(\Phi)$ for all $\Phi$, and $\h(\Gamma_g^n) \subset \hT(\Gamma_g^n)$. Our last theorem determines the \textit{full image} of $\h$ and $\hT$ on mapping class groups $\Gamma_g^n$:

\begin{mthmc} 
The image of $\Gamma_g^n$, for $n \geq 1$, under $\hT$ and $\h$ is  
\begin{enumerate}
\item $\hT(\Gamma_g^n)= \h(\Gamma_g^n) = \N \cup \{-\infty\}$ \, if $g=0$ and $n \geq 2$, or if $g=1$, \
\item $\hT(\Gamma_g^n)= \h(\Gamma_g^n) = \N \cup \{\pm \infty\}$ \, if $g \geq 2$.
\end{enumerate}
\end{mthmc}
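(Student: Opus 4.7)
The plan is to check, in each regime, that the stated set is exactly the image of $\h$ (and separately of $\hT$) on $\Gamma_g^n$. The identity realizes $0$, and $t_c^{-1}$ (for $c$ nonseparating when $g \geq 1$, or $c$ essential separating when $g=0$) realizes $-\infty$: any positive word in Dehn twists has nonnegative translation on a suitable curve-complex or homology invariant, while $t_c^{-1}$ does not. Thus $\{0, -\infty\} \subseteq \h(\Gamma_g^n) \subseteq \hT(\Gamma_g^n)$ in all cases.

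For the value $+\infty$, one direction comes from Theorem~A: when $g \geq 2$ and $n \leq 2g-4$, already $\h(\Delta) = +\infty$. The remaining small-$n$ pairs in genera $2$ and $3$ are handled by invoking known unbounded families of genus-$g$ Lefschetz fibrations over $S^2$ admitting $n$ disjoint sections of negative self-intersection; cutting out disk neighborhoods of such sections produces elements of $\Gamma_g^n$ with arbitrarily long positive factorizations. Conversely, $+\infty \notin \hT(\Gamma_g^n)$ for $g \leq 1$ follows from the bounded topology of allowable Lefschetz fibrations with low-genus fibers: in genus zero from the classification of planar open books and their Stein fillings, and in genus one from the signature formula for elliptic Lefschetz fibrations together with the structure of the abelianization of $\Gamma_1^n$.

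The crux is producing, for each $k \geq 1$, an element $\Phi_k \in \Gamma_g^n$ with $\h(\Phi_k) = k$. For $g \geq 1$, take $\Phi_k = t_{c_1} \cdots t_{c_k}$ where $c_1, \ldots, c_k$ are disjoint, pairwise non-isotopic, nonseparating simple closed curves whose complement in $\Sigma_g^n$ decomposes into pieces of controlled topological complexity (say pants and annuli). The lower bound $\h(\Phi_k) \geq k$ is immediate. The upper bound $\h(\Phi_k) \leq k$ is the main obstacle: any positive factorization of $\Phi_k$ of length $l$ yields an allowable Lefschetz fibration whose total space has Euler characteristic and signature prescribed by $\Phi_k$, and these must be compatible with $\Phi_k$ being a multitwist supported on an annular neighborhood of $c_1 \cup \cdots \cup c_k$; extracting $l \leq k$ requires a careful subsurface/signature argument that also neutralizes the standard length-increasing substitutions (lantern, chain, star, Matsumoto). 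For $g=0$, one realizes each $k$ via $\Phi_k = t_{\delta_1}^k \in \Gamma_0^n$, whose $\hT$-length is pinned by a direct calculation in the planar mapping class group. In each regime these examples realize the claimed image set for both $\h$ and $\hT$, yielding the equality $\hT(\Gamma_g^n) = \h(\Gamma_g^n)$.
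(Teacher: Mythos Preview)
Your proposal has two genuine gaps.

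First, your argument that $\h(\Phi_k) \leq k$ for the multitwist $\Phi_k = t_{c_1}\cdots t_{c_k}$ is not a proof: you write that ``extracting $l \leq k$ requires a careful subsurface/signature argument that also neutralizes the standard length-increasing substitutions,'' which concedes that the key step is missing. Euler characteristic and signature of the associated Lefschetz fibration do not by themselves pin down the number of vanishing cycles when the monodromy is not a boundary multitwist, and there is no evident obstruction along those lines. (There is also a secondary issue: you cannot choose $k$ pairwise non-isotopic disjoint nonseparating curves once $k$ exceeds a bound depending only on $g$ and $n$.) The paper instead takes $\Phi_k = t_c^k$ for a \emph{single} homologically essential curve $c$ and proves that the positive factorization is literally unique, via right-veering: if a positive product of Dehn twists equals a multitwist along a nonisolating multicurve $C$, then every arc disjoint from $C$ is fixed by the product and hence---since a composition of right-veering maps cannot undo a rightward push---by each individual factor, forcing every factor to be supported in an annular neighborhood of $C$. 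This yields $\hT(t_c^k) = \h(t_c^k) = k$ cleanly, with no appeal to $4$-manifold invariants.

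Second, your coverage of the value $+\infty$ for $g \geq 2$ is mis-stated. Theorem~A gives $\h(\Delta) = +\infty$ only when $n \leq 2g-4$, so the uncovered pairs are all $n \geq 1$ for $g=2$, all $n \geq 3$ for $g=3$, and in general all $n \geq 2g-3$ for every $g \geq 2$; these are the \emph{large}-$n$ cases, not ``small-$n$ pairs in genera $2$ and $3$.'' Your appeal to ``known unbounded families of genus-$g$ Lefschetz fibrations with $n$ disjoint sections'' is precisely the nontrivial content that the paper establishes in Theorem~B, parts~(2) and~(3) (that $\h(\Delta^2)=+\infty$ and $\h(\Delta\, t_a)=+\infty$ for all $g\geq 2$ and all $n\geq 1$), which require substantial new monodromy factorizations and cannot simply be invoked. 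The paper's proof of Theorem~C cites those results directly, since either one already covers every $g \geq 2$ and every $n \geq 1$ in one stroke.

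Your handling of the value $-\infty$ and of the $g \leq 1$ upper bound is along the right lines, though more elaborate than needed: the paper obtains the former from the right-veering monoid (a nontrivial right-veering map has non-right-veering inverse) and the latter by capping off all but one or two boundary components and reading off the image in the abelianization of $\Gamma_0^2$ or $\Gamma_1^1$.
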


Theorem~C, along with parts of Theorems~A and ~B, records the existence of mapping classes with arbitrarily long positive factorizations. In the course of its proof we will in fact spell out mapping classes (as multitwists along nonseparating curves) with positive factorizations of \textit{unique} lengths. 

Any $\Phi$ with $\h(\Phi)= +\infty$ provides an example of a contact $3$-manifold with arbitrarily large Stein fillings. The mapping class $\Phi$ prescribes an open book, which in turn determines a contact $3$-manifold by Thurston-Winkelnkemper, whose Stein fillings are obtained from the allowable Lefschetz fibrations corresponding to respective positive factorizations. We therefore extend our earlier results in \cite{BV1, BV2}, and obtain many more counter-examples to Stipsicz's conjecture \cite{Stipsicz2}, which predicted an a priori bound on the Euler characteristics of Stein fillings. Clearly, any contact $3$-manifold Stein cobordant to one of these examples also bear the same property. Since having a supporting open book with infinite length monodromy is a contact invariant, our detailed analysis summarized in the results above can be used to distinguish contact structures on $3$-manifolds.

\vspace{0.2in}
The novelty in the proofs of the above theorems is the engagement of essentially four different methods: 

\noindent { \textbf{1.} \textit{Underlying symplectic geometry and Seiberg-Witten theory.}} The bounds and calculations of the maximal length in the finite cases in Theorem~A will follow from our analysis of the underlying Kodaira dimension of the symplectic Lefschetz pencils corresponding to these factorizations. Here the symplectic Kodaira dimension will provide a useful way to organize the essential information. 
Indeed, we will observe that the only classes of $4$-manifolds yielding pencils or fibrations with unbounded Euler characteristic are symplectic $4$-manifolds of general type, i.e. of Kodaira dimension $2$. Both our Kodaira dimension calculations \cite{Sato, BH} (even the very fact that the Kodaira dimension is a well-defined invariant \cite{Li1}) and the sharp inequalities we obtain heavily depend on Taubes' seminal work on pseudoholomorphic curves and Seiberg-Witten equations on symplectic $4$-manifolds. 

\noindent { \textbf{2.} \textit{Dehn monoid and right-veering.}} Realizing the finite lengths in Theorem~B (Proposition~\ref{unique}), as well as our recap of previous results which establishes the lack of any positive factorizations for certain mapping classes in Theorems~A and C (Proposition~\ref{nonpositive}) will be obtained using Thurston type right-veering arguments \cite{ShortWiest} \cite{HKM}, and will rely on the structure of the positive Dehn twist monoid of $\Gamma_g^n$. 

\noindent { \textbf{3.} \textit{Homology of mapping class groups of small genera surfaces.}} The precise calculation for the genus $1$ case in Theorem~B, and the bounds in Theorem~C for low genera cases (Proposition~\ref{smallgenera}), will follow from our complete understanding of the first homology group of the corresponding mapping class groups. 

\noindent { \textbf{4.} \textit{Constructions of new monodromy factorizations.}} We will show that an artful application of braid, chain and lantern substitutions applied to carefully tailored mapping class group factorizations allows one to obtain arbitrarily long positive factorizations in Theorems~A and~B (Theorems~\ref{thm1},~\ref{thm2} and~\ref{thm3}). This greatly extends the earlier array of partial results of \cite{BKM, BV1, BV2, DKP} to the possible limits of these constructions as dictated by our results above.

\smallskip
The outline of our paper is as follows: In Section~\ref{Sec:finite}, we discuss mapping classes with bounded lengths, and in Section~\ref{Sec:infinite} we construct those with infinite lengths. These results will be assembled to complete the proofs of our main theorems in Section~\ref{Sec:final}, where we will also present a couple more results on lengths of mapping classes and special subgroups of mapping class groups, and discuss some related questions.

\vspace{0.1in}
\noindent \textit{Acknowledgements.} The first author was partially supported by the NSF Grant 
DMS-$1510395$ and the Simons Foundation Grant $317732$ . The second author was supported by the Grant-in-Aid for Young Scientists (B) (No. 13276356), Japan Society for the Promotion of Science. The third author was partially supported by the Simons Foundation Grant $279342$.

\vspace{0.1in}
\section{Mapping classes with finite lengths} \label{Sec:finite}

Here we investigate various examples of mapping classes with no positive factorizations or with only factorizations of bounded length. We first probe mapping classes on small genera surfaces, as well as those with small compact support in the interior, who have unique lengths. We then move on to showing that boundary multitwists involving too many boundary components have an a priori bound on the length of their positive factorizations.

\subsection{Simple mapping classes with prescribed lengths} \

There are two tools that we will use to bound the number of Dehn twists in a given factorization. The first uses the fact that the mapping class group $\Gamma_g^n$ for $g=0,1$ surjects to $\Z$, based on $H_1(\Gamma_g^n; \Z)$ having a $\Z$ component. Secondly, we will use \emph{right-veering} methods of Thurston \cite{ShortWiest} and Honda-Kazez-Mati\'c \cite{HKM}.

We begin with analyzing the $g=0,1$ case, for any $n \geq 1$. When $g=0$, equivalent arguments were given by Kaloti in \cite{Kaloti} and Plamenevskaya in \cite{P}.

\begin{proposition} \label{smallgenera}
For $g \leq 1$ any positive factorization of $\Phi \in \Gamma_g^n$ along homologically essential curves has bounded length. So for any $\Phi$, $\hT(\Phi)$ is bounded above. Further, when $g=1$, the length of any factorization into non-separating Dehn twists is fixed.
\end{proposition}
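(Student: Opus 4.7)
The plan is to produce, for each $g \leq 1$ and $n \geq 1$, a surjective homomorphism $\phi : \Gamma_g^n \to \Z$ under which every Dehn twist along a homologically essential simple closed curve has strictly positive image. Granted such $\phi$, for any positive factorization $\Phi = t_{c_\ell} \cdots t_{c_1}$ along homologically essential curves one has
\[
\ell \;\le\; \sum_{i=1}^{\ell} \phi(t_{c_i}) \;=\; \phi(\Phi),
\]
which gives the uniform upper bound on $\hT(\Phi)$.

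For $g=0$, the existence of $\phi$ is essentially the content of the arguments of Kaloti and Plamenevskaya already cited. Concretely, the abelianization $H_1(\Gamma_0^n;\Z)$ is generated (up to lantern-type relations) by the classes $[t_{\delta_i}]$ of the boundary-parallel Dehn twists, and any Dehn twist along a curve enclosing a collection of $k \ge 2$ boundary components is homologous to the sum of those $k$ boundary classes. Summing all coordinate projections yields a $\phi$ under which every essential Dehn twist has positive image.

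For $g=1$, I would construct $\phi$ from the classical isomorphism $\Gamma_1^1 \cong B_3$ composed with the abelianization $B_3 \to \Z$; this sends every non-separating Dehn twist to $\pm 1$, and extends to $\Gamma_1^n$ for arbitrary $n \geq 1$ via a cap-off homomorphism that preserves the image of a non-separating twist. Since $\Gamma_1^n$ acts transitively on isotopy classes of non-separating simple closed curves, all non-separating Dehn twists are conjugate and so share a common $\phi$-image, which we normalize to $+1$; consequently any positive factorization of $\Phi$ into exactly $\ell$ non-separating Dehn twists satisfies $\ell = \phi(\Phi)$, which gives the uniqueness statement. The delicate step, which I expect to be the main obstacle, is to confirm that $\phi$ remains positive on Dehn twists along homologically essential \emph{separating} curves (only those are relevant for the $\hT$ bound, not $\h$): such a curve cobounds a subsurface of positive genus, and applying a chain relation to express the separating twist as a product of non-separating Dehn twists shows that its $\phi$-image is a positive integer multiple of the generator. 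Should this case analysis prove awkward, an alternative that bypasses it entirely is to appeal to the right-veering technology of Thurston and Honda-Kazez-Mati\'c to bound positive factorization lengths independently of the homological picture.
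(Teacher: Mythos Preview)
Your approach for $g=0$ and for the non-separating statement in $g=1$ is essentially the paper's approach: cap off to a base case and use the abelianization. The genuine gap is in your $\hT$ bound for $g=1$ with $n \ge 2$, where you try to make do with a \emph{single} cap-off homomorphism $\phi:\Gamma_1^n \to \Gamma_1^1 \to \Z$ and argue via a chain relation that every homologically essential separating twist has positive $\phi$-image. This fails. Take $n\ge 2$ and a separating curve $c$ cutting $\Sigma_1^n$ into a genus-$1$ piece $A$ containing $\delta_1$ and a genus-$0$ piece $B$ containing $\delta_2,\dots,\delta_n$. Then $[c]=-[\delta_1]\neq 0$, so $c$ is homologically essential; but capping off all boundaries except $\delta_1$ makes $c$ bound a disk, hence $\phi(t_c)=0$. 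Your chain-relation step does not repair this: a chain $a,b$ inside $A$ gives $(t_at_b)^6=t_{c'}$ where $c'$ bounds a once-punctured torus neighborhood of $a\cup b$, and since $A$ also contains $\delta_1$ one has $c'\neq c$; indeed $\phi(t_c)=0$ already shows that no expression of $t_c$ in non-separating twists can have positive net count.

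The fix, and this is exactly what the paper does, is not to use one cap-off but all $n$ of them (keeping $\delta_i$ for each $i$ in turn) and to \emph{sum} the resulting homomorphisms. Every homologically essential curve remains essential under at least one of these cappings, so the sum is strictly positive on each relevant Dehn twist, yielding the $\hT$ bound. Your right-veering fallback is not a substitute: right-veering detects whether a positive factorization can exist at all, but gives no upper bound on its length.
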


\begin{proof}
While a careful look at $H_1(\Gamma_g^n)$ will yield more information, a bound on the number of non-trivial Dehn twists in a positive factorization of a mapping class $\Phi$ can be obtained by capping off to the bases cases of $g=0, n=2$ or $g = 1, n=1$. 

\noindent {\it Genus 0:} For genus 0, the base case is $n=2$. The mapping class group of the annulus $\Gamma_0^2$ is isomorphic to $\Z$ and generated by the right-handed Dehn twist about the core of the annulus. In this case, $\hT (\Phi) = [\Phi]$ and moreover, a positive factorization, if it exists, is unique. 

When $n>2$, one can fix an outer boundary component, and identify $\Sigma$ with a disk with holes. The homomorphism induced by capping off all but a single interior boundary component $\bdry_i$ counts the number of Dehn twists (in any factorization) that enclose $\bdry_i$. Every essential curve must enclose at least one interior boundary component and so shows up in at least one of these counts. Adding up the images of $\Phi$ for all of these homomorphisms then gives a bound on the number of Dehn twists in any positive factorization of $\Phi$.

\noindent {\it Genus 1:} For genus 1, the base case is $n=1$. The first homology of the mapping class group $H_1(\Gamma_1^1)$ isomorphic to $\Z$ and generated by the right-handed Dehn twist about any nonseparating curve. The boundary Dehn twist has $[t_\bdry]=12$. In this case, $\h (\Phi) = [\Phi]$ and $\hT \leq [\Phi]$. 

When $n>1$, then just as above, one can cap off all but one boundary component and calculate the value of ${\Phi}$ there. Any essential curve will remain essential for at least one of these maps, and so adding up the values of all of the images of $\Phi$ will give a bound on the length of any factorization of $\Phi$ into Dehn twists along homologically essential curves and an upper bound on $\hT(\Phi)$

Even better, though, any non-separating curve will remain non-separating after every capping, so the length of any positive factorization into non-separating Dehn twists can be found by capping off all but one boundary component of $\Sigma$ and calculating $[{\Phi}]$ there. This determines $\h(\Phi)$ on the nose.
\end{proof}

\begin{remark}
Notice that combining the above with a theorem of Wendl \cite{Wendl} we recover the following theorem of \cite{P, Kaloti}: \textit{If the open book prescribed by $\Phi \in \Gamma_g^n$, $n \geq 1$ is stably equivalent to a planar open book, then $\hT(\Phi)$ is finite.} Equivalently, a mapping class $\Phi \in \Gamma_g^n$, $n \geq 1$ with $\hT = + \infty$ cannot be stably equivalent to a mapping class $\Psi \in \Gamma_0^m$, for any $m \geq 1$. 
\end{remark}

We now move on to producing particular mapping classes with prescribed finite lengths in $\Gamma_g^n$ for any $g,n \geq 1$, for which we first review the notion of right-veering \cite{HKM}. Let $\alpha$ and $\beta$ two properly embedded oriented arcs in an oriented surface $\Sigma$ with $\partial \Sigma \neq 0$, having the same endpoints $\alpha(0) = \beta(0) = x_0$ and $\alpha(1) = \beta(1)$ on $\partial \Sigma$. Choose a lifted base point $\tilde{x}_0$ of $x_0$ and lifts to the universal cover $\tilde{\alpha}$ and $\tilde{\beta}$ of the arcs $\alpha$ and $\beta$ starting at $\tilde{x}_0$. We say $\beta$ is \emph{to the right of $\alpha$ at $x_0$} if the boundary component of $\tilde{\Sigma}$ containing $\tilde{\beta}(1)$ is to the right of that containing $\tilde{\alpha}(1)$ when viewed from $\tilde{x}_0$. 

When $\Sigma$ is an annulus, right veering is equivalent to being a positive power of the right-handed Dehn twist. For surfaces {other than the annulus}, one thinks of $\tilde{\Sigma}$ as sitting in the Poincar\'e disk model of the hyperbolic plane, lift the arcs to geodesics, and consider the endpoints $\tilde{\alpha}(1)$ and $\tilde{\beta}(1)$ radially from the boundary of $\tilde{\Sigma}$ to the boundary of the disk. If the oriented path from $\tilde{\beta}(1)$ to $\tilde{\alpha}(1)$, avoiding $\tilde{x}_0$, has the same orientation as the boundary orientation induced by the disk, then $\beta$ is to the right of $\alpha$. (More generally, one can take homotopic representatives of $\alpha$ and $\beta$ which intersect minimally and are transverse at $x_0$, and ask whether in the neighborhood of $x_0$, $\beta$ lies on the right or left of $\alpha$.) A diffeomorphism $\Phi$ is called \emph{right-veering} if for every base point $x_0$ and every properly embedded arc $\alpha$ starting at $x_0$, either $\Phi(\alpha)$ is \emph{to the right} of $\alpha$ at $x_0$ or $\Phi(\alpha)$ is homotopic to $\alpha$, fixing the end points. It is straightforward to see that this property is well-defined for an isotopy class of $\Phi$ rel boundary, and is independent of the choice of the base point. Hence we call a mapping class $\Phi$ to be right-veering if any representative of it is. It turns out that $Veer_g^n$, generated by right-veering mapping classes in $\Gamma_g^n$ is a monoid of $\Gamma_g^n$, and contains $Dehn_g^n$, generated by all positive Dehn twists in $\Gamma_g^n$ as a submonoid \cite{HKM}.

\begin{prop} \label{prop:dehn pos} 
Let $\prod_{i=1}^{n} t_{c_i}$ be a positive Dehn twist factorization of a mapping class element $\Phi$. If $\Phi$ preserves an arc $\alpha$, then every curve $c_i$ is disjoint from the arc $\alpha$. 
\end{prop}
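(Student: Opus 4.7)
The plan is to run a right-veering collapse argument in the vein of the Honda-Kazez-Matić setup recalled above. I would first recall the key partial-order fact: fixing a basepoint $x_0 \in \partial \Sigma$, the relation ``$\beta$ is to the right of $\alpha$ at $x_0$, or $\beta \simeq \alpha$ rel endpoints'' is a partial order on the set of isotopy classes of properly embedded arcs beginning at $x_0$ and ending at $\alpha(1)$. The essential property I would use is that for any simple closed curve $c \subset \Sigma$ and any such arc $\beta$, the image $t_c(\beta)$ is weakly to the right of $\beta$ in this order, and is strictly to the right whenever $c$ has essential geometric intersection with $\beta$; equivalently, $t_c(\beta) \simeq \beta$ rel endpoints if and only if $c$ admits a representative disjoint from $\beta$.

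With this in hand, I would set $\alpha_0 = \alpha$ and $\alpha_k = t_{c_k}(\alpha_{k-1})$ for $1 \leq k \leq n$, so that $\alpha_n = \Phi(\alpha)$. Iterated application of the right-veering property of each positive Dehn twist produces the chain
\[
\alpha \,=\, \alpha_0 \,\leq\, \alpha_1 \,\leq\, \alpha_2 \,\leq\, \cdots \,\leq\, \alpha_n \,=\, \Phi(\alpha) \,\simeq\, \alpha
\]
in this partial order. Antisymmetry forces every inequality to be an equality, so $\alpha_k \simeq \alpha$ rel endpoints for every $k$. Consequently, for each $k$ one has $t_{c_k}(\alpha) = t_{c_k}(\alpha_{k-1}) = \alpha_k \simeq \alpha$, and the equality clause of the key lemma then forces $c_k$ to be isotopic to a curve disjoint from $\alpha$, as claimed.

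The main obstacle I anticipate is nailing down cleanly the strict form of right-veering for a single positive Dehn twist, i.e.\ the equivalence ``$t_c(\alpha) \simeq \alpha$ iff $c$ is disjoint from $\alpha$''. This is essentially a universal-cover computation: placing $c$ and $\alpha$ in minimal position and lifting, any essential intersection forces the endpoint of $\widetilde{t_c(\alpha)}$ in $\partial \widetilde{\Sigma}$ to be distinct from, and strictly to the right of, that of $\tilde{\alpha}$, contradicting the isotopy. Once this lemma is in place, the rest of the argument is purely formal, and the proposition follows immediately.
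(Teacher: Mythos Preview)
Your argument is correct and is essentially the same as the paper's: both set up the monotone chain of arcs under the right-veering partial order, use that the chain begins and ends at $\alpha$ to collapse it, and then invoke the fact that $t_c$ fixes an arc if and only if $c$ is disjoint from it. The paper phrases this last step tersely (``Since Dehn twists are right-veering, the proposition follows''), whereas you isolate it explicitly as the ``equality clause'' and note that it requires a short universal-cover check; this added care is welcome but does not constitute a different approach.
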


\begin{proof}
If $\Phi$ is a mapping class which preserves the homotopy class of an arc $\alpha$, then for any factorization of $\Phi$ as a product of right-veering maps $\Phi = \Phi_r  \circ \ldots \circ \Phi_1$, we can see that each of $\Phi_i$  also preserve $\alpha$. For if any $\Phi_i$ moves $\alpha$, then it has to send it to the right, after which every $\Phi_j$, $i < j \leq r$ either fixes it, or sends it further to the right. Since Dehn twists are right-veering, the proposition follows.
\end{proof}

A \emph{multicurve} $C$ on $\Sigma$, which is a collection of disjoint simple closed curves, is said to be \emph{nonisolating} if every connected component of $\Sigma \setminus C$ contains a boundary component of $\Sigma$. Next, we observe that Dehn twist along such $C$ can realize any finite length $l$. 

\begin{prop} \label{unique}
Let $C = c_1 \cup \cdots \cup c_r$ be a {nonisolating} multicurve on $\Sigma$ and $m_1,\dots, m_r$ non-negative integers. Then the multitwist $\prod_{i=1}^r t_{c_i}^{m_i}$ has a {unique factorization} in $\Gamma_g^n$ into positive Dehn twists and hence $\hT\left(\prod_{i=1}^r t_{c_i}^{m_i}\right) =  m_1 + \cdots + m_r$. 
\end{prop}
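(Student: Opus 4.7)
The plan is to combine Proposition \ref{prop:dehn pos} with the free abelian structure of the group generated by disjoint Dehn twists. Writing $\Phi = \prod_{i=1}^{r} t_{c_i}^{m_i}$, I would exhibit an arc system $A$ that $\Phi$ preserves up to isotopy, forcing every Dehn twist curve in any positive factorization of $\Phi$ to be disjoint from $A$, and then show that any such curve which is homologically essential must be isotopic to some component of $C$.

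To build $A$, I would first cut $\Sigma$ along $C$ into subsurfaces $S_1, \ldots, S_k$. Because $C$ is nonisolating, each $S_j$ contains at least one boundary component of $\Sigma$. In each $S_j$ I choose a collection of pairwise disjoint, properly embedded arcs with both endpoints on $\partial \Sigma \cap S_j$ which together cut $S_j$ into disks; the union over all $j$ is $A$. By construction $A$ is disjoint from $C$, and $\Sigma \setminus (A \cup C)$ is a disjoint union of open disks. Since $\Phi$ is a product of Dehn twists along curves disjoint from each arc of $A$, it preserves each arc in $A$ up to isotopy, so Proposition \ref{prop:dehn pos} applies arc by arc and shows that every Dehn twist curve appearing in any positive factorization of $\Phi$ is disjoint from $A$. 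Any such curve $c$ together with $C$ sits in $\Sigma$ with complement a disjoint union of disks; hence $c$ either bounds a disk in $\Sigma$, which is ruled out by homological essentialness, or is isotopic to some $c_i$.

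To finish, I would use that pairwise disjoint, pairwise nonisotopic essential Dehn twists generate a free abelian group of rank $r$ in $\Gamma_g^n$. Any positive factorization of $\Phi$ into homologically essential Dehn twists is then of the form $\prod_j t_{d_j}$ with each $d_j$ isotopic to some $c_{i_j}$; since the twists commute, this rearranges to $\prod_i t_{c_i}^{n_i}$, and the relation $\prod_i t_{c_i}^{n_i - m_i} = 1$ forces $n_i = m_i$. The total length is therefore $\sum_i m_i$ and the factorization is unique up to reordering of commuting factors, yielding both assertions.

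The hard part will be the construction of $A$. One must check that a genus-$g'$ component $S_j$ with some ``new'' boundary circles (inherited from $C$) and at least one ``old'' boundary circle (from $\partial \Sigma$) can always be cut into disks using arcs whose endpoints all lie on the old boundary of $S_j$. This is the only place where the nonisolating hypothesis enters, and it is precisely what it provides: without an old boundary in some component, no such arc system would exist, and essential curves trapped inside that component would be free to appear in positive factorizations, breaking both uniqueness and the length count.
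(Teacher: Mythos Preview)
Your overall strategy coincides with the paper's: use Proposition~\ref{prop:dehn pos} to force every Dehn twist curve in a positive factorization of $\Phi$ to avoid a suitable arc system $A$, and then argue that the only essential curves missing $A$ are isotopic to the $c_i$. The free-abelian step at the end is fine.

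The gap is in the construction of $A$, precisely the step you flag as ``the hard part.'' You assert that each component $S_j$ of $\Sigma$ cut along $C$ can be cut into \emph{disks} by properly embedded arcs whose endpoints all lie on $\partial\Sigma\cap S_j$. This is impossible as soon as $S_j$ carries a ``new'' boundary circle coming from $C$ (which it always does when $C\neq\emptyset$ and $\Sigma$ is connected): since none of your arcs meet such a circle, it persists as an entire boundary component of whichever piece contains it, and that piece therefore has at least two boundary circles and is not a disk. Consequently $\Sigma\setminus(A\cup C)$ is never a union of open disks, and the deduction in your third paragraph rests on a false premise.

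The fix is exactly what the paper does: choose $A$ so that $\Sigma$ cut along $A$ is a disjoint union of \emph{annuli}, one deformation retracting onto each $c_i$. Concretely, in each $S_j$ take arcs from the old boundary to itself that isolate an annular collar of every new boundary circle and reduce whatever is left to disks; regluing along $C$ pairs these collars into annular neighborhoods of the $c_i$. A curve disjoint from $A$ then lies in one such annulus, so it is either inessential or isotopic to the core $c_i$, and your concluding argument goes through verbatim.
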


\begin{proof} 
Let $\Phi = \prod_{i=1}^r t_{c_i}^{m_i}$ with $m_1,\dots, m_r$ non-negative integers as above. By Proposition~\ref{prop:dehn pos}, for any positive factorization of $\Phi$, each factor fixes every arc which is disjoint from $C$. Since $C$ is nonseparating, we can find arcs disjoint from $C$ which cut $\Sigma$ into disjoint annuli that respectively deformation retract onto $\bigcup c_i$. Thus all factors are supported on these annuli, must be Dehn twists along these annuli, and give the obvious factorization $\prod_{i=1}^n t_{c_i}^{m_i}$. 
\end{proof}

Lastly, we note a general source of mapping classes in $\Gamma_g^n$, $n \geq 1$, with no positive factorizations:

\begin{proposition}\label{nonpositive}
If $\Phi$ is a non-trivial element in $Veer_g^n$, $n \geq 1$, then $\Phi^{-1}$ is not. Thus if $\Phi$ admits a nontrivial positive factorization, then $\Phi^{-1}$ is not right-veering. In particular, $\h(\Delta^k) = \hT(\Delta^k) =- \infty$ for $k <0$ and $\h(1) = \hT(1) = 0$. 
\end{proposition}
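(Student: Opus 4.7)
The plan is to establish the core dichotomy $Veer_g^n \cap (Veer_g^n)^{-1} = \{1\}$ by exploiting the antisymmetry of the relation ``strictly to the right of'' on isotopy classes of arcs, and then to extract the remaining consequences formally using $Dehn_g^n \subseteq Veer_g^n$ and the existence of explicit positive factorizations of $\Delta^{-k}$.

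For the first statement I would argue by contrapositive: suppose both $\Phi$ and $\Phi^{-1}$ lie in $Veer_g^n$, and fix any properly embedded arc $\alpha$ based at some $x_0 \in \partial\Sigma$. Right-veering of $\Phi$ forces $\Phi(\alpha)$ to be either isotopic to or strictly to the right of $\alpha$, while right-veering of $\Phi^{-1}$ applied to the arc $\Phi(\alpha)$ forces $\alpha = \Phi^{-1}(\Phi(\alpha))$ to be either isotopic to or strictly to the right of $\Phi(\alpha)$. Since the ``strictly to the right of'' relation, read off from the cyclic order of lifted endpoints on the ideal boundary of the universal cover viewed from a common lift $\tilde{x}_0$, is antisymmetric, these two conditions are simultaneously consistent only when $\Phi(\alpha)$ is isotopic to $\alpha$ rel endpoints. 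Running this over a family of arcs cutting $\Sigma$ into disks and invoking the Alexander method then yields that $\Phi$ is isotopic to the identity rel $\partial\Sigma$, proving the first claim.

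The remaining consequences are formal. Any $\Phi$ with a non-trivial positive factorization lies in $Dehn_g^n \setminus \{1\} \subseteq Veer_g^n \setminus \{1\}$, so by the above $\Phi^{-1} \notin Veer_g^n$ and in particular $\Phi^{-1}$ admits no positive factorization. For $k < 0$ the element $\Delta^{-k}$ has the explicit positive factorization of length $-nk \geq 1$ consisting of $-k$ copies of $t_{\delta_1}\cdots t_{\delta_n}$ (non-trivial since the $t_{\delta_i}$ have infinite order and commute), so $\Delta^k = (\Delta^{-k})^{-1}$ admits no positive factorization at all, giving $\h(\Delta^k) = \hT(\Delta^k) = -\infty$. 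Finally, $\h(1) = \hT(1) = 0$ follows since the empty product provides length $0$, whereas any length $\geq 1$ positive factorization $1 = t_{c_l}\cdots t_{c_1}$ would rearrange to $t_{c_l}\cdots t_{c_2} = t_{c_1}^{-1}$: for $l = 1$ the relation $t_{c_1} = 1$ contradicts the infinite order of essential Dehn twists, while for $l \geq 2$ the left side lies in $Veer_g^n$ and the right side, being the inverse of the non-trivial right-veering element $t_{c_1}$, does not lie in $Veer_g^n$ by the first part — a contradiction. The only non-formal ingredient I would be careful about is the Alexander-method step upgrading ``fixes every arc up to isotopy rel $\partial\Sigma$'' to ``is isotopic to the identity''; everything else is symbol pushing once the antisymmetry of ``strictly to the right'' is in hand.
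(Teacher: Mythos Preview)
Your argument is correct and rests on exactly the same two ingredients as the paper's proof: the antisymmetry of the ``strictly to the right'' relation at a basepoint, and the Alexander-method fact that a mapping class fixing every arc (up to isotopy rel $\partial$) is trivial. The only difference is packaging: the paper argues directly rather than by contrapositive --- since $\Phi$ is nontrivial it moves some arc $\alpha$, necessarily to the right, and then $\Phi^{-1}$ visibly sends $\Phi(\alpha)$ to $\alpha$, i.e.\ to the left --- whereas you assume both $\Phi$ and $\Phi^{-1}$ are right-veering and deduce $\Phi$ fixes every arc. These are contrapositives of one another and invoke the Alexander method at the dual point (``nontrivial $\Rightarrow$ moves some arc'' versus ``fixes all arcs $\Rightarrow$ trivial''). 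Your treatment of the formal consequences for $\Delta^k$ and for the identity is more explicit than the paper's, which simply records them, but the content is the same.
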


\begin{proof} If $\Phi$ is non-trivial and right-veering, then it moves at least one arc $\alpha$ to the right. Then $\Phi^{-1}$ sends $\Phi(\alpha)$ to $\alpha$; that is, to the left.
\end{proof}

\noindent The particular case we noted above, that $\Delta^k$ for $k \leq 0$ does not admit any nontrivial positive factorizations, was first observed by Smith \cite{Smith} (whose arguments are similar to ours) and also by Stipsicz \cite{Stipsicz} (who used Seiberg-Witten theory).

\vspace{0.2in}
\subsection{Boundary multitwists with finite lengths} \

We are going to prove:

\begin{thm} \label{Kodaira}
Let $\Delta = t_{\delta_1} \cdot t_{\delta_2} \ldots \cdot t_{\delta_n}$ be the boundary multitwist on $\Sigma_g^n$, with $n  \geq 2g-3 \geq 0$. If $n > 4g+4$, then $\h(\Delta) =-\infty$. If $n \leq 4g+4$, we have
\begin{equation*}
\h(\Delta) = \left\{\begin{array}{rl}  
40 & \mbox{     if } g=2 ,   \\ 
6g+18 & \mbox{    if }3 \leq g \leq 6,  \, \, \, \, \, \, \, \, \, \, \, \, \, \, \, \, \, \, \, \, \, \, \, \, \, \, \, \, \, \, \, \, \, \, \, \, \, \, \, \, \, \, \, \, \, \, \, \, \, \, \, \, \, \, \, \, \, \, \, \, \, \, \, \, \, \, \, \, \, \, \, \, \, \, \, \, \, \, \,   \, \, \, \, \, \\ 
8g+4 & \mbox{    if }g \geq 7. \end{array}\right.
\end{equation*}
In particular, when $\h(\Delta)$ is finite, its value depends solely on $g$, and not $n$.
\end{thm}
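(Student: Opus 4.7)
My plan is to translate positive factorizations of $\Delta$ into genus-$g$ Lefschetz pencils with $n$ base points on symplectic $4$-manifolds, and bound the length via Kodaira dimension analysis of the underlying $4$-manifold. A positive factorization of length $l$ along nonseparating curves corresponds, by Donaldson--Gompf, to a relatively minimal genus-$g$ pencil with $n$ base points and $l$ singular fibers on a symplectic $4$-manifold $X$; reading the Euler characteristic off the induced Lefschetz fibration on the blow-up $X \# n\CPb$ yields the key identity
\begin{equation*}
l \,=\, e(X) + 4g - 4 + n.
\end{equation*}
So bounding $\h(\Delta)$ is equivalent to bounding $e(X)$ over all symplectic $X$ admitting such a pencil (or ruling $X$ out), and the proof naturally splits along the possible symplectic Kodaira dimensions $\kappa(X) \in \{-\infty, 0, 1, 2\}$.

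For the nonexistence claim when $n > 4g+4$, the fiber class $F$ satisfies $F^2 = n$ and, by adjunction, $K_X \cdot F = 2g - 2 - n < -2g - 6$, so $K_X \cdot F$ is very negative. Taubes' pseudoholomorphic curve theory combined with Li's well-definedness of $\kappa$ then forces $\kappa(X) = -\infty$, i.e.\ $X$ is rational or ruled. I would invoke the structural results of Sato and of Baykur--Hayano on pencils on such surfaces to conclude $n \leq 4g + 4$, contradicting the hypothesis. The same classification simultaneously identifies the extremal rational/ruled pencil as a bidegree-$(2, g+1)$ pencil on $S^2 \times S^2$, which saturates $l = 8g + 4$.

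For the remaining range $2g - 3 \leq n \leq 4g+4$, I would bound $e(X)$ case by case. When $\kappa(X) \geq 0$, pseudoeffectivity of $K_X$ forces $K_X \cdot F \geq 0$ and hence $n \leq 2g - 2$; combining this with the classical Euler characteristic bound $e(X_{\min}) \leq 24$ for $\kappa = 0$ (and the analogous bound for $\kappa = 1$) and adjunction then caps the length at $l \leq 6g + 18$, realized by a genus-$g$ pencil on K3 with $n = 2g - 2$. For $\kappa(X) = 2$, Noether-type inequalities applied to the pencil together with the adjunction data isolate a maximum of $l = 40$ realized in the $g = 2$ case by a Horikawa-type pencil. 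Comparing the three contributions $40$, $6g+18$, and $8g+4$ across $g$ yields exactly the stated formula: general type wins only at $g=2$, K3 dominates for $3 \leq g \leq 6$, and the two agree at $g = 7$, with $S^2 \times S^2$ taking over for $g \geq 7$. The sharpness of these upper bounds---explicit positive factorizations realizing each maximum---will come from the substitution constructions (braid, chain, lantern) carried out in Section~\ref{Sec:infinite}. The principal obstacle is the $\kappa(X) = 2$ step: one must combine BMY/Noether-type inequalities with the numerical constraints $F^2 = n$ and $K_X \cdot F = 2g - 2 - n$ to extract a $g$-only upper bound that matches the Horikawa realization, while also ruling out any inflation of $l$ coming from non-minimality of $X$ or auxiliary $(-1)$-curves disjoint from the base locus.
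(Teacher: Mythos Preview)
Your overall framework---translating to pencils and organizing by symplectic Kodaira dimension---matches the paper's. But there is a genuine gap in how you handle $\kappa(X) \geq 1$.

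You write that for $\kappa = 1$ there is an ``analogous bound'' to $e(X_{\min}) \leq 24$. There is no such bound: minimal symplectic $4$-manifolds with $\kappa = 1$ (e.g.\ the elliptic surfaces $E(k)$) have unbounded Euler characteristic. Likewise, Noether/BMY inequalities for $\kappa = 2$ relate $K^2$ and $e$ but do not bound $e$ by a function of $g$ alone. So your strategy of bounding $e(X)$ case-by-case cannot close the argument for $\kappa \in \{1,2\}$; these cases must instead be \emph{ruled out} (for $g \geq 3$). The paper does this using Sato's structure theorem for the canonical class of a nonminimal genus-$g$ Lefschetz fibration: when $n = 2g-2$ and $\kappa \neq -\infty$, the exceptional sections represent $K_X$ over $\Q$, forcing $\kappa = 0$; when $n = 2g-3$ and $\kappa \geq 1$ (with $g \geq 3$), Sato's theorem forces a genus-$1$ component of a \emph{reducible} fiber into the canonical class, contradicting the hypothesis that all vanishing cycles are nonseparating. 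The $g=2$, $n=1$ case is handled separately by Smith's direct analysis of genus-$2$ pencils, which gives $l \leq 40$; this is not obtained from general-type geography inequalities.

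Two smaller points. First, for $n > 4g+4$ the paper does not invoke Sato or Baykur--Hayano; it uses Stipsicz's inequality $4(b_1 - g) + b_2^- \leq 5 b_2^+$ on the ruled blow-up together with an explicit adjunction calculation on $\CP$ to exclude the residual case. Second, the realizations of the sharp bounds $8g+4$, $6g+18$, and $40$ are not the substitution constructions of Section~\ref{Sec:infinite} (those produce \emph{arbitrarily long} factorizations); they come from explicit pencils in the literature (Tanaka, Saito--Sakakibara for the rational case; Baykur--Monden and Smith for the $\K$ case; Smith for $g=2$).
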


Let us briefly review here the notion of \emph{symplectic Kodaira dimension} we will repeatedly refer to in our proof of this theorem. The reader can turn to \cite{Li1} for more details. First, we recall that a symplectic $4$-manifold $(X, \omega)$ is called \emph{minimal} if it does not contain any embedded symplectic sphere of square $-1$, and also that it can always be blown-down to a minimal symplectic $4$-manifold $(X_{\text{min}}, \omega')$. Let $\kappa_{X_{\text{min}}}$ be the canonical class of a minimal model $(X_{\text{min}}, \omega_{\text{min}})$. We define the symplectic Kodaira dimension of $(X, \omega)$, denoted by $\kappa=\kappa(X,\omega)$ as follows:
\[
\kappa(X,\omega)= \left\{\begin{array}{rl}  -\infty& \mbox{if
}\kappa_{X_{\text{min}}}\cdot[\omega_{\text{min}}]<0 \mbox{ or } \kappa_{X_{\text{min}}}^{2}<0 \\
0 & \mbox{if } \kappa_{X_{\text{min}}}\cdot[\omega_{\text{min}}]=\kappa_{X_{\text{min}}}^{2}=0\\ 1 &
\mbox{if }\kappa_{X_{\text{min}}}\cdot[\omega_{\text{min}}]>0\mbox{ and
}\kappa_{X_{\text{min}}}^{2}=0\\2& \mbox{if }\kappa_{X_{\text{min}}}\cdot[\omega_{\text{min}}]>0\mbox{
and }\kappa_{X_{\text{min}}}^{2}>0\end{array}\right.
\]
Here $\kappa$ is independent of the minimal model $(X_{\text{min}}, \omega_{\text{min}})$ and is a smooth invariant of the $4$-manifold $X$.

\begin{proof}[Proof of Theorem~\ref{Kodaira}]

Assume that $\Delta$ admits a positive Dehn twist factorization $W= t_{c_l}  \cdots t_{c_1}$ along nonseparating curves $c_i$ in $\Gamma_g^n$. Let $(X,f)$ be the genus-$g$ Lefschetz fibration with $n$ disjoint $(-1)$-sphere sections, $S_1, \ldots, S_n$, associated to this factorization. We can support $(X,f)$ with a symplectic form $\omega$, with respect to which all $S_j$ are symplectic as well. Note that by the hypothesis, $g \geq 2$ and $n \geq 1$, the latter implying that $X$ is \textit{not} minimal. 

For fixed $g$, $l$ is maximized if and only if the Euler characteristic of $X$ is, where $\eu(X)=4-4g+l$. Whereas fixing $n$, along with $g$, will play a role in narrowing down the possible values of the symplectic Kodaira dimension $\kappa(X)$. We will read off $\kappa(X)$ based on the number of $(-1)$-sphere sections of $f$. In principal, we need to know that there are no other disjoint $(-1)$-sphere sections than $S_1, \ldots, S_n$, that is, there are no lifts of the positive factorization to a boundary multitwist in $\Gamma_g^{n'}$ with $n'>n$. We will overcome this issue by simply presenting our arguments starting with $\kappa= - \infty$ and going up to non-negative $\kappa=0$ cases. (Meanwhile, it will become evident that $\kappa=1$ and $2$ cases cannot occur, so the proof will boil down to realizing and comparing the bounds we obtain in $\kappa=0$ and $1$ cases.)

If $n > 2g-2$, we can blow-down the $n$ $(-1)$-sphere sections $S_1, \ldots, S_n$  to derive a symplectic surface $F'$ from a regular fiber $F$ of $f$, which has genus $g$ and self-intersection $n$. Since the Seiberg-Witten adjunction inequality  
\[ 2g-2 = -\eu(F') \geq [F']^2 + | \beta \cdot F'| \geq [F']^2 = n \, , \]
is violated by $F'$, we conclude that $X'$ (and thus $X$) should be a rational or ruled surface \cite{LiLiu}. These are precisely the symplectic $4$-manifolds with Kodaira dimension $\kappa = - \infty$.

If $n = 2g-2$, and $X$ is not rational or ruled, it follows from Sato's work on the canonical class of genus $g \geq 2$ Lefschetz fibrations on non-minimal symplectic $4$-manifolds that the canonical class $K_X$ can be represented by the sum $\sum _{j=1}^n S_j$ of the exceptional sphere sections \textit{in $H_2(X ; \Q)$} \, (see \cite{Sato} and also \cite{BH}).\footnote{As the author's argument in \cite{Sato} is based on positive intersections of holomorphic curves, it essentially captures the homology class of $K_X$ only in $\Q$ coefficients -- which otherwise would lead to a contradiction for pencils on the Enriques surface. Hence we quote here the result with this small correction \cite{BH}.} Blowing-down all $S_j$ we get $K_{X'}=0$ in $H_2(X; \Q)$. In particular, the canonical class is torsion, and so $X$ is a blow-up of a symplectic Calabi-Yau surface, $\kappa(X)=0$. The minimal model of $X$ should then have the rational homology type of a torus bundle over a torus, the Enrique surface, or the $\K$ surface by the work of Li and independently of Bauer \cite{Bauer, Li2, Li3}.

Now if $ n = 2g -3$, and  $X$ is not rational or ruled or a (blow-up of a) symplectic Calabi Yau surface, then the collection $\sum_{j=1}^n S_j$ realizes the maximal disjoint collection of representatives of its exceptional classes intersecting the fiber. It therefore follows from Sato's work in \cite{Sato} that, \textit{provided $g \geq 3$} for the genus-$g$ Lefschetz fibration on $X$, the canonical class of $X$ is represented by $2 S_1 + \sum_{j=2}^{n} S_j + R$, where $S_1$ is a distinguished $S_j$ we get by relabeling if necessary, and more importantly, $R$ is a genus-$1$ irreducible component of a reducible fiber with $[R]^2=-1$. The latter condition however is not realized by any Lefschetz fibration with only nonseparating vanishing cycles, which allows us to rule out this case. Finally, in the remaining $g=2$ and $n=1$ case, it follows from Smith's analysis of genus-$2$ pencils in \cite{Smith1}[Theorem~5.5] that the maximal number of irreducible singular fibers is $l= 40$. 

We have thus seen that for $ n \geq 2g-3$, the $\kappa=2$ and $\kappa=1$ cases are already ruled out. It therefore suffices to discuss the $\kappa = -\infty$ and $0$ cases, and compare the largest $l$ we get in these cases to determine the winner, all while remembering we have an additional candidate in the $g=2$ case as noted above.

Let $\kappa(X)= - \infty$. Because $X$ is not minimal, and because $\CP \# \CPb$ does not admit any genus $g > 0$ Lefschetz fibration with a $(-1)$-section\footnote{If it did, one would get a homology class $F=aH$ with $F^2=1$, where $H$ is the generator of  $H_2(\CP)$. This is only possible when $a=1$, which implies that the fiber genus is zero.}, we have $X \cong S^2 \x \Sigma_h \#m \CPb$ for some $h \geq 0, m \geq 1$.  We have $\eu(X)=2(2-2h)+m=4-4g+l$, so 
\begin{equation}\label{first}
l = 4(g -h) + m \leq 4g + m.
\end{equation}
On the other hand, it was shown in \cite{Stipsicz1} that $4(b_1(X) - g) + b^-_2 (X) \leq 5 b^+_2 (X)$. For $X$ with $b_1(X) = 2h$, $b_2^+ (X) = 1$, and $b_2^- (X) = m+1$, we get 
\begin{equation}\label{second}
m \leq 4-4(2h-g) \leq  4+4g.
\end{equation}
Combining the inequalities (\ref{first}) and (\ref{second}), we conclude that  $l \leq 8g+4$. 

The first conclusion, namely that $h(\Delta)= -\infty$ when $n>4g+4$, is rather immediate. Here $X \cong S^2 \x \Sigma_h \#m \CPb$, and either $h>0$ and we have $m \geq n$ or $h=0$ and we only have $m+1 \geq n$. The inequality (\ref{second}) above, combined with our assumption $n>4g+4$, implies that the former is impossible, wheras the latter can hold only if $m=4g+5$. However, we claim that there is no genus-$g$ Lefschetz pencil on $\CP$ with $m=4g+5$ base points. Let $H$ represent the generator of $H_2(\CP)$, and $F= aH$ represent the potential fiber class of the pencil. Since there exists a symplectic form for which the fiber is symplectic, and since $\CP$ has a unique symplectic structure up to deformations and symplectomorphisms, we can invoke the adjunction equality as
\[2g-2=F^2 - 3H \cdot F = 4g+5 -3a \, , \]
so $3a=2g+7$. As $F$ is a fiber class, $a^2= F^2 = 4g+5$ should be satisfied as well. The only possible solution is when $a=3$, which is the case of a $g=1$ pencil (indeed, the well-known case of an elliptic pencil on $\CP$) we have excluded from our discussion (recall our hypothesis  $n \geq 2g-3 \geq 0$).

Moreover, note that the equality $l=8g+4$ holds only if $m = 4g + 4$ and $h=0$. There exist such genus-$g$ Lefschetz fibrations with $l=8g+4$ irreducible fibers and $m=4g+4$ sections of square $-1$ on $\CP \# (4g+5) \CPb$; see \cite{Tanaka, SaSa}. 

Now, let $\kappa(X)=0$. Recall that $l$ is maximal when $\eu(X)$ is. Rational cohomology $\K$ has the largest Euler characteristics among all minimal candidates, and as discussed above, one can hope to have a genus-$g$ Lefschetz fibration on at most $2g-2$ blow-ups of a symplectic Calabi-Yau surface. It follows that the maximal $l$ is realized when $X$ is a rational cohomology $\K$ surface blown up $2g-2$ times. So $2(2-2g)+l = 24+2g-2$, implying \, $l=6g+18$. Such Lefschetz fibrations on symplectic Calabi Yau $\K$ surfaces are constructed in \cite{BM}; also see \cite{Smith1}[Proof of Theorem~3.10]. 

Hence all remaining conclusions of the theorem follow from a comparison of the maximal $l$ we get in the $\kappa= - \infty$ and $\kappa =0$ cases, along with the additional ($\kappa=2$) case when $g=2$.
\end{proof}

\begin{remark} 
As seen in our proof, there is an a priori upper bound, determined by the genus $g$ and the number of base points $n$, on the number of critical points of Lefschetz pencils when $\kappa < 1$, and for pencils with only irreducible fibers when $\kappa \leq 1$. So arbitrarily large topology is specific to pencils on \textit{symplectic $4$-manifolds of general type}, i.e. when $\kappa =2$. In contrast, when the uniform topology is bounded, the maximal Euler characteristic for a genus-$g$ Lefschetz pencil with $n$ base points can be realized by an $(X,f)$ with $\kappa =2$ when $g=2$, $\kappa = 0$ when $3 \leq g \leq 7$, and $\kappa = - \infty$ when $g \geq 7$. 
\end{remark}

\vspace{0.1in}
\section{Mapping classes with infinite lengths} \label{Sec:infinite}

Here we will construct arbitrarily long positive factorizations of various mapping classes involving boundary multitwists in $\Gamma_g^n$, for $g \geq 2$, $n \geq 1$.

\subsection{Preliminary results} \

We begin with a brief exposition of various recent results on arbitrarily long positive factorizations in \cite{BKM, BV1, BV2, DKP}, which creates leverage to many of our results to follow. We hope that the proofs given below will help with making the current article self-contained in this aspect. 

First examples or arbitrarily long positive factorizations were produced in \cite{BKM} by Korkmaz, and the first two authors of this article, for a \textit{varying family} of single commutators in $\Gamma_g^2$, for any $g \geq 2$. The proof of this result is based on the following well-known relations: Let $c_1, c_2,\ldots,c_{2h+1}$ be simple closed curves on $\Sigma_g^n$ such that $c_i$ and $c_j$ are disjoint if $|i-j|\geq 2$ and that $c_i$ and $c_{i+1}$ intersect at one point. Then, a regular neighborhood of $c_1\cup c_2 \cup\cdots \cup c_{2h+1}$ is a subsurface of genus $h$ with two boundary components, $b_1$ and $b_2$. 
We then have the \textit{chain relations}:
\begin{align*}
&t_{b_1}t_{b_2} = (t_{c_1}t_{c_2}\cdots t_{c_{2h+1}})^{2h+2} = (t_{c_{2h+1}}\cdots t_{c_2}t_{c_1})^{2h+2}. 
\end{align*}
\begin{figure}[ht]
\centering
\includegraphics[scale=.75]{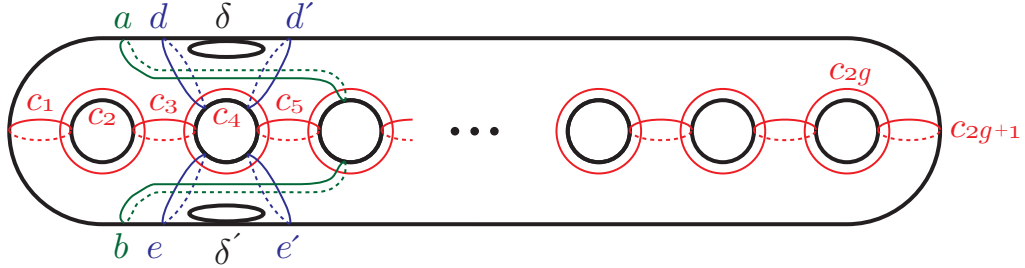}
\caption{The curves $c_1,c_2,\ldots,c_{2g+1}$ and $a,b,d,d^\prime,e,e^\prime$ and the boundary curves $\delta,\delta^\prime$ on $\Sigma_g^2$.}
\label{curves1}
\end{figure}
\noindent Now for a chain of length $5$, we get $t_d t_e = (t_{c_1}t_{c_2}t_{c_3})^4$ and by applying the below relation (\ref{hurwitz2}) to $(t_{c_1}t_{c_2}t_{c_3})^2$, we obtain 
\begin{align*}
t_d t_e = (t_{c_1}t_{c_2}t_{c_3})^2 t_{c_2}t_{c_3}t_{c_1}t_{c_2}t_{c_3}t_{c_3}. 
\end{align*}
Since $d,e,c_1$ and $c_3$ are disjoint, we have 
\[ (t_{c_1}t_{c_2}t_{c_3})^2t_{c_2}t_{c_1}t_{c_3}t_{c_2} = t_dt_{c_3}^{-1}t_et_{c_3}^{-1} \, . \]
\noindent Taking the $m$-th power of both sides, we obtain $T_{10m} = t_d^mt_{c_3}^{-m}t_e^mt_{c_3}^{-m}$ for any positive integer $m$, where $T_{10m} = \{(t_{c_1}t_{c_2}t_{c_3})^2t_{c_2}t_{c_1}t_{c_3}t_{c_2}\}^m$.
Now let 
\[ \phi_{12} = t_{c_4} t_{c_3} t_{c_2} t_{c_1} t_{c_1} t_{c_2} t_{c_3} t_{c_4} t_{c_4} t_d t_{c_3} t_{c_4} \, . \]
\noindent Since $\phi_{12}(c_3)=e$ and $\phi_{12}(d)=c_3$, we get:
\begin{align*}
\phi_{12}&=\phi_{12}t_d^{-m}t_{c_3}^mt_e^{-m}t_{c_3}^m T_{10m} \\
&=\phi_{12}t_d^{-m}t_{c_3}^m \phi_{12}^{-1} \phi_{12} t_e^{-m}t_{c_3}^m T_{10m} \\
&=t_{\phi_{12}(d)}^{-m}t_{\phi_{12}(c_3)}^m \phi_{12} t_e^{-m}t_{c_3}^m T_{10m} \\
&=t_{c_3}^{-m}t_e^m \phi_{12} t_e^{-m}t_{c_3}^m T_{10m}. 
\end{align*}
\noindent We thus obtain the commutator relation in \cite{BKM}
\[ C_m= [\phi_{12}, t_{c_3}^m t_e^m] = T_{10m} \, ,\]
the right-hand side of which contains arbitrarily long positive factorizations as $m$ increases. 

These commutator relations prescribe a family of genus-$2$ Lefschetz fibrations over $T^2$ with sections of self-intersection zero. Taking the complement of the regular fiber and the section, the first and the third authors of this article produced allowable Lefschetz fibrations filling a fixed spinal open book, leading to the first examples of contact $3$-manifolds with arbitrarily large Stein fillings and arbitrarily negative signatures \cite{BV1}. Guided by these examples, in a subsequent work \cite{BV2}, the same authors produced the first examples of mapping classes with arbitrarily long positive factorizations. They showed that \textit{any} family of commutators $C_m=[A_m, B_m]$ with arbitrarily long positive factorizations can be crafted into arbitrarily long positive factorizations of the boundary multitwist $t_{\delta_1} t_{\delta_2}$ in $\Gamma_g^2$, for $g \geq 8$. 

The arguments of \cite{BV2} were taken further in an elegant article by Dalyan, Korkmaz and Pamuk in \cite{DKP}, who observed that for special commutators $C_m=[A, B_m]$, where one entry is a fixed mapping class, as in the commutator relation we reproduced above, one can manipulate the relations so as to produce arbitrarily long positive factorizations in $\Gamma_2^2$. Namely, by repeating the relation (\ref{hurwitz2}), we have
\begin{align*}
(t_{c_1}t_{c_2}t_{c_3}t_{c_4}t_{c_5})^6 &= (t_{c_1}t_{c_2}t_{c_3}t_{c_4})^5 t_{c_5}t_{c_4}t_{c_3}t_{c_2}t_{c_1}t_{c_1}t_{c_2}t_{c_3}t_{c_4}t_{c_5} \\
&= t_{c_1}t_{c_2}t_{c_3}t_{c_4} (t_{c_1}t_{c_2}t_{c_3}t_{c_4})^4 t_{c_5}t_{c_4}t_{c_3}t_{c_2}t_{c_1}t_{c_1}t_{c_2}t_{c_3}t_{c_4}t_{c_5} \\
&= t_{c_1}t_{c_2}t_{c_3}t_{c_4} (t_{c_1}t_{c_2}t_{c_3})^4 t_{c_4}t_{c_3}t_{c_2}t_{c_1} t_{c_5}t_{c_4}t_{c_3}t_{c_2}t_{c_1}t_{c_1}t_{c_2}t_{c_3}t_{c_4}t_{c_5}.
\end{align*}
Since $t_\delta$ and $t_{\delta^\prime }$ are center elements of $\Gamma_2^2$, by the chain relations $t_\delta t_{\delta^\prime} = (t_{c_1}t_{c_2}t_{c_3}t_{c_4}t_{c_5})^6$ and $t_dt_e = (t_{c_1}t_{c_2}t_{c_3})^4$ we obtain 
\begin{align*}
t_\delta t_{\delta^\prime} &= t_{c_1}t_{c_2}t_{c_3}t_{c_4} t_dt_e t_{c_4}t_{c_3}t_{c_2}t_{c_1} t_{c_5} \cdot t_{c_4}t_{c_3}t_{c_2}t_{c_1}t_{c_1}t_{c_2}t_{c_3}t_{c_4} \cdot t_{c_5} \\
&= t_{c_4}t_{c_3}t_{c_2}t_{c_1}t_{c_1}t_{c_2}t_{c_3}t_{c_4} \cdot t_{c_5} \cdot t_{c_1}t_{c_2}t_{c_3}t_{c_4} t_dt_e t_{c_4}t_{c_3}t_{c_2}t_{c_1} t_{c_5} \\
&= t_{c_4}t_{c_3}t_{c_2}t_{c_1}t_{c_1}t_{c_2}t_{c_3}t_{c_4} \cdot t_{c_4}t_dt_{c_3} \cdot D_9 \\ 
&= D_9 \cdot t_{c_4}t_{c_3}t_{c_2}t_{c_1}t_{c_1}t_{c_2}t_{c_3}t_{c_4} \cdot t_{c_4}t_dt_{c_3}, 
\end{align*}
where $D_9=t_{(t_{c_4} t_d t_{c_3})^{-1}(c_5)} t_{c_1} t_{t_{c_3}^{-1}(c_2)} t_{(t_{c_4} t_d t_{c_3})^{-1}(c_3)} t_{e} t_{t_{c_3}^{-1}(c_4)} t_{c_2} t_{c_1} t_{c_5}$. 
By multiplying both sides of this relation by $t_{c_4}$, we obtain: 
\[ t_{\delta} t_{\delta^\prime} t_{c_4} = D_9 \cdot \phi_{12,m} \cdot T_{10m} \, .\]

\smallskip
We sum up these in the following:
\begin{thm}[\cite{BKM, DKP}] \label{DKP}
Let $d,e$ and $c_i$, $i=1,2,3,4,5$, be the simple closed curves on $\Sigma_2^2$ as in Figure~\ref{curves1}, and let 
\begin{align*}
\phi_{12} &= t_{c_4} t_{c_3} t_{c_2} t_{c_1} t_{c_1} t_{c_2} t_{c_3} t_{c_4} t_{c_4} t_d t_{c_3} t_{c_4}, \\
\phi_{12,m} &= t_{c_3}^{-m}t_e^m \phi_{12} t_e^{-m} t_{c_3}^m \\
T_{10m} &= \{(t_{c_1} t_{c_2} t_{c_3})^2 t_{c_2} t_{c_1} t_{c_3} t_{c_2}\}^m, \ \ \ \mathrm{and}\\
D_9 &= t_{(t_{c_4} t_d t_{c_3})^{-1}(c_5)} t_{c_1} t_{t_{c_3}^{-1}(c_2)} t_{(t_{c_4} t_d t_{c_3})^{-1}(c_3)} t_{e} t_{t_{c_3}^{-1}(c_4)} t_{c_2} t_{c_1} t_{c_5}. 
\end{align*}
Then, for all positive integer $m$, the following relations hold in $\Gamma_2^2$ \, :
\begin{align}
\phi_{12} &= \phi_{12,m} \cdot T_{10m}, \label{DKPrelation0} \ \ \ \ \ \ \ \ \ \text{(Baykur-Korkmaz-Monden)}
\\
t_{\delta} t_{\delta^\prime} t_{c_4} &= D_9 \cdot \phi_{12,m} \cdot T_{10m}. \ \ \ \ \text{(Dalyan-Korkmaz-Pamuk)} \label{DKPrelation1} 
\end{align}
\end{thm}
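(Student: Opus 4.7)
The proof of Theorem~\ref{DKP} is essentially laid out in the preceding exposition, so my plan is to extract and organize that material into two clean steps, one for each relation, supported by a small number of auxiliary identities.

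First, for relation~(\ref{DKPrelation0}), the plan is to prove the auxiliary identity
\[
T_{10m} = t_d^{m}\, t_{c_3}^{-m}\, t_e^{m}\, t_{c_3}^{-m}
\]
and then twist it into the conjugation shape of $\phi_{12,m}$. The starting point is the chain relation for the length-$5$ chain $c_1, c_2, c_3, c_4, c_5$ restricted to the subsurface bounded by $d$ and $e$, which gives $t_d t_e = (t_{c_1} t_{c_2} t_{c_3})^{4}$. Applying one Hurwitz move to a single factor $(t_{c_1} t_{c_2} t_{c_3})^{2}$ and using that $d$, $e$, $c_1$ and $c_3$ are pairwise disjoint (so the corresponding Dehn twists commute and can be shuffled freely past one another) yields $t_d t_{c_3}^{-1} t_e t_{c_3}^{-1} = (t_{c_1}t_{c_2}t_{c_3})^{2} t_{c_2}t_{c_1}t_{c_3}t_{c_2}$, whose $m$-th power is the desired identity. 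To convert this into relation~(\ref{DKPrelation0}), I would verify directly by tracking the action on arcs that $\phi_{12}(c_3) = e$ and $\phi_{12}(d) = c_3$; then, using the standard conjugation formula $\phi\, t_x\, \phi^{-1} = t_{\phi(x)}$, insert $\phi_{12}^{-1} \phi_{12}$ between the two conjugate pairs on the right-hand side and simplify.

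Second, for relation~(\ref{DKPrelation1}), the plan is to start from the chain relation for the full length-$5$ chain, $t_{\delta} t_{\delta^\prime} = (t_{c_1} t_{c_2} t_{c_3} t_{c_4} t_{c_5})^{6}$, and expose a copy of $(t_{c_1} t_{c_2} t_{c_3})^{4} = t_d t_e$ inside it. As indicated in the excerpt, this is accomplished by iterated Hurwitz moves: first peel off one factor of the length-$5$ chain using a Hurwitz relation on $(t_{c_1}\cdots t_{c_4})^{5}$, then further reduce $(t_{c_1}\cdots t_{c_4})^{4}$ to isolate $(t_{c_1} t_{c_2} t_{c_3})^{4}$, and substitute $t_d t_e$. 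Since $t_\delta$ and $t_{\delta^\prime}$ are central in $\Gamma_2^2$, I can cyclically rotate the resulting product to bring the block $t_{c_4} t_{c_3} t_{c_2} t_{c_1} t_{c_1} t_{c_2} t_{c_3} t_{c_4}\, t_{c_5}$ to the front, leaving a tail of the form $t_{c_1} t_{c_2} t_{c_3} t_{c_4}\, t_d t_e\, t_{c_4} t_{c_3} t_{c_2} t_{c_1}\, t_{c_5}$. A further cyclic shuffle and a Hurwitz move on this tail should reassemble it as $t_{c_4} t_d t_{c_3}$ followed by precisely the nine-twist word $D_9$, where the disjointness-based commutations explain why the twists appearing in $D_9$ are exactly along the curves $(t_{c_4} t_d t_{c_3})^{-1}(c_5)$, $c_1$, $t_{c_3}^{-1}(c_2)$, etc. Multiplying both sides by $t_{c_4}$ on the right recognises $t_{c_4} t_{c_3} t_{c_2} t_{c_1} t_{c_1} t_{c_2} t_{c_3} t_{c_4} t_{c_4} t_d t_{c_3} t_{c_4} = \phi_{12}$, and the result follows after substituting relation~(\ref{DKPrelation0}).

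The hard part is the bookkeeping of the Hurwitz moves and, especially, pinning down the exact identity of $D_9$: one must verify that each of the nine curves on which $D_9$ twists is really the image of a $c_i$ under the bracketed homeomorphisms written, and that the order of the twists comes out correctly after the cyclic and Hurwitz rearrangements. The two curve identities $\phi_{12}(c_3) = e$ and $\phi_{12}(d) = c_3$ are the other delicate points, but they are finite checks on a fixed surface that can be verified by drawing the sequence of twist images on Figure~\ref{curves1}. Once these are pinned down, both relations~(\ref{DKPrelation0}) and~(\ref{DKPrelation1}) follow as a straight algebraic assembly in $\Gamma_2^2$.
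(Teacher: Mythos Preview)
Your proposal is correct and follows essentially the same route as the paper's own argument: derive $T_{10m} = t_d^{m} t_{c_3}^{-m} t_e^{m} t_{c_3}^{-m}$ from the chain relation and disjointness, use $\phi_{12}(c_3)=e$, $\phi_{12}(d)=c_3$ to obtain~(\ref{DKPrelation0}), then expand $(t_{c_1}\cdots t_{c_5})^6$ via iterated Hurwitz moves, substitute $t_d t_e$, cyclically rotate using centrality of $t_\delta t_{\delta'}$, identify $D_9$, and multiply by $t_{c_4}$ to recognise $\phi_{12}$. One small slip: the relation $t_d t_e = (t_{c_1}t_{c_2}t_{c_3})^4$ is the chain relation for the length-$3$ chain $c_1,c_2,c_3$, not the length-$5$ chain; and in the second part the tail you must rewrite as $t_{c_4} t_d t_{c_3} \cdot D_9$ also contains the leading $t_{c_5}$ from the cyclic rotation, not just $t_{c_1}t_{c_2}t_{c_3}t_{c_4}\, t_d t_e\, t_{c_4}t_{c_3}t_{c_2}t_{c_1}\, t_{c_5}$.
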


\begin{figure}[ht]
\centering
\includegraphics[scale=.45]{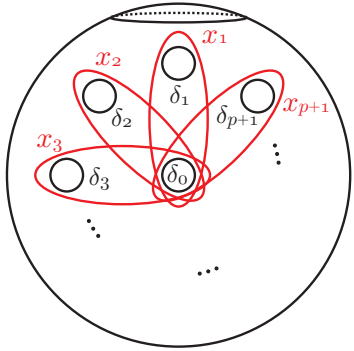}
\caption{The curves $\delta_0,\delta_1,\ldots,\delta_{p+1}$ and $x_1,x_2,\ldots,x_{p+1}$.}
\label{daisy}
\end{figure}
Finally, let us recall the following generalization of the \textit{lantern relation}, which was proved in \cite{PVHM}, and independently in \cite{BKM}, and was called the \textit{daisy relation} in \cite{EMVHM}. This relation will be key for inflating number of boundary components, and extending Theorem~\ref{DKP} to $\Gamma_g^n$ for any $1 \leq n \leq 2g-4$.
\begin{align*}
t_{\delta_0}^{p-1} t_{\delta_1} t_{\delta_2} \cdots t_{\delta_{p+1}} = t_{x_1} t_{x_2} \cdots t_{x_{p+1}}.
\end{align*}
\noindent in $\Gamma_0^{p+2}$, the mapping class group of a $2$-sphere with $p+2 \geq 4$ boundary components. Here $\delta_0, \delta_1, \delta_2,\ldots, \delta_{p+1}$ denote the $p+2$ boundary curves of $\Sigma_0^{p+2}$, and $x_1, x_2,\ldots, x_{p+1}$ are the interior curves as shown in Figure~\ref{daisy}. The $p=2$ case is the usual lantern relation.

\subsection{Boundary multitwist of infinite length} \

\begin{thm}\label{thm1}
Let $g\geq 3$. Then, in $\Gamma_g^{2g-4}$, the multitwist 
\begin{align*}
t_{\delta_1} t_{\delta_2} \cdots t_{\delta_{2g-4}}
\end{align*}
can be written as a product of arbitrarily large number of right-handed Dehn twists about nonseparating curves. 
\end{thm}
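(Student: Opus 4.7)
The plan is to graft the Dalyan--Korkmaz--Pamuk relation (\ref{DKPrelation1}), which provides positive factorizations of $t_\delta t_{\delta'} t_{c_4}$ in $\Gamma_2^2$ of arbitrarily large length $10m + 21$, onto a fixed positive ``completion'' supported in the complement of an embedded copy of $\Sigma_2^2$ inside $\Sigma_g^{2g-4}$.

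I first embed $\Sigma_2^2 \hookrightarrow \Sigma_g^{2g-4}$ so that the two boundary components $\delta, \delta'$ of $\Sigma_2^2$ become disjoint nonseparating simple closed curves in the ambient surface (individually nonseparating, since cutting $\Sigma_g^{2g-4}$ along just one of them leaves the surface connected through the other), with complement a genus $(g-2)$ subsurface $Y \cong \Sigma_{g-2}^{2g-2}$ whose boundary components are $\delta, \delta', \delta_1, \ldots, \delta_{2g-4}$. My goal is to produce a positive factorization $P$ into Dehn twists along nonseparating curves in $\Sigma_g^{2g-4}$, of length depending only on $g$, such that
\begin{equation*}
t_{\delta_1} t_{\delta_2} \cdots t_{\delta_{2g-4}} \,=\, (t_\delta \, t_{\delta'}\, t_{c_4}) \cdot P
\end{equation*}
holds in $\Gamma_g^{2g-4}$. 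Once such a $P$ is produced, substituting the DKP factorization for $t_\delta t_{\delta'} t_{c_4}$ will yield, for every $m \geq 1$, a positive factorization
\begin{equation*}
t_{\delta_1} \cdots t_{\delta_{2g-4}} \,=\, D_9 \,\phi_{12,m}\, T_{10m}\cdot P
\end{equation*}
in nonseparating Dehn twists whose length $10m + 21 + |P|$ grows linearly in $m$, proving the theorem.

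To construct $P$, I intend to combine the daisy relation (for $g \geq 4$), applied on a sphere subsurface $\Sigma_0^{2g-3} \subset \Sigma_g^{2g-4}$ whose boundary consists of $\delta_1, \ldots, \delta_{2g-4}$ together with an interior curve, or the lantern relation (for the boundary case $g = 3$), applied on $\Sigma_0^4 \subset \Sigma_3^2$ with boundary $\{\delta, \delta', \delta_1, \delta_2\}$, together with chain relations in suitable subsurfaces of the complement $Y$ that express the separating powers of Dehn twists which appear positively in terms of nonseparating twists.

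The main technical obstacle is this combinatorial construction: one has to exhibit the specific mapping class $(t_\delta t_{\delta'} t_{c_4})^{-1} \cdot t_{\delta_1} \cdots t_{\delta_{2g-4}}$ explicitly as a positive product of nonseparating Dehn twists in $\Gamma_g^{2g-4}$, and this requires a delicate coordination of chain, lantern and daisy substitutions. The hypothesis $n = 2g - 4$ is precisely what makes the numerology work out: the daisy exponent $p - 1 = 2g - 6$ aligns with powers realizable positively from DKP paired with chain relations inside the complementary $\Sigma_{g-2}^{2g-2}$, whereas for $n > 2g - 4$ no such positive substitution can exist, consistent with the upper-bound side of Theorem~A.
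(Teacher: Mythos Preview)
Your high-level strategy is sound and matches the paper's in spirit: isolate a subsurface where the BKM/DKP mechanism produces arbitrarily long positive words, and complete the rest of $t_{\delta_1}\cdots t_{\delta_{2g-4}}$ by a fixed positive product $P$ of nonseparating twists. The difficulty is that you have not constructed $P$, and this construction is not a detail---it is the entire content of the argument. Simply saying you ``intend to combine daisy and chain relations'' does not establish that the mapping class $(t_\delta t_{\delta'} t_{c_4})^{-1}\,t_{\delta_1}\cdots t_{\delta_{2g-4}}$ is a positive product of nonseparating twists; a priori there is a stray $t_{c_4}^{-1}$ to absorb, and your daisy on a $\Sigma_0^{2g-3}$ with one interior boundary $\delta_0$ would only give $t_{\delta_1}\cdots t_{\delta_{2g-4}} = t_{\delta_0}^{-(2g-6)} t_{x_1}\cdots t_{x_{2g-4}}$, introducing further negative twists rather than eliminating them. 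Your $g=3$ sketch is also off: with your embedding the complement of $\Sigma_2^2$ in $\Sigma_3^2$ is $\Sigma_1^4$, so $\{\delta,\delta',\delta_1,\delta_2\}$ do not bound a four-holed sphere and no lantern applies there directly.

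For comparison, the paper does \emph{not} start from an embedded $\Sigma_2^2$ inside $\Sigma_g^{2g-4}$. It works first in $\Gamma_g^2$ (with $\delta,\delta'$ the actual boundaries) and, via careful braid manipulations of the long chain relation $t_\delta t_{\delta'}=(t_{c_1}\cdots t_{c_{2g+1}})^{2g+2}$ (Lemmas~\ref{lem1}, \ref{lem2}, Proposition~\ref{prop1}), rewrites $t_\delta t_{\delta'}$ as a positive word ending precisely in $t_d^{\,g-3} t_{d'}\, t_e^{\,g-3} t_{e'}$. That specific tail is what makes the daisy substitution of Lemma~\ref{sections1} apply cleanly \emph{twice} (once per boundary, with $l=g-2$), converting $t_\delta t_{\delta'}$ into $t_{\delta_1}\cdots t_{\delta_{2g-4}}$ in $\Gamma_g^{2g-4}$ without introducing any negative twists. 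The arbitrary length then enters through Lemma~\ref{long1}, which identifies a subword equal to $\phi_{12}$ and substitutes the BKM relation~(\ref{DKPrelation0}). The point is that the exponents $g-3$ on $t_d,t_e$ are engineered in advance to match the daisy; your proposal lacks any analogue of this alignment, and until you produce it the argument is incomplete.
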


Let $a,b,d,d^\prime,e,e^\prime$ and $c_i$ $(i=1,2,\ldots,2g+1)$ be the simple closed curves on $\Sigma_g^2$, 
and let $\delta$ and $\delta^\prime$ be the two boundary curves of $\Sigma_g^2$ as in Figure~\ref{curves1}.

We will now introduce the key lemma for the proofs of Theorem~\ref{thm1} and~\ref{thm2}, the Lemma~\ref{sections1}.

Let $l$ be a positive integer such that $l\leq n$. Let $\beta$ and $\alpha,\alpha^\prime$ be the separating curve and the nonseparating curves on $\Sigma_g^n$ in Figure~\ref{idea}, respectively. Note that $\beta$ separates $\Sigma_g^n$ into a surface of genus $g$ with one boundary $\beta$ and a sphere with $l+1$ boundaries $d, \delta_1, \delta_2, \ldots, \delta_l$ and that $\alpha$ and $\alpha^\prime$ separate $\Sigma_g^n$ into a surface of genus $g-1$ with $2$ boundaries $\alpha$ and $\alpha^\prime$ and a sphere with $l+2$ boundaries $\alpha, \alpha^\prime, \delta_1,\delta_2,\ldots,\delta_l$. Let $x_1,x_2,\ldots,x_l$ be the nonseparating curves on $\Sigma_g^n$ in Figure~\ref{idea}. 

\begin{figure}[h]
 \centering
      \includegraphics[scale=.80]{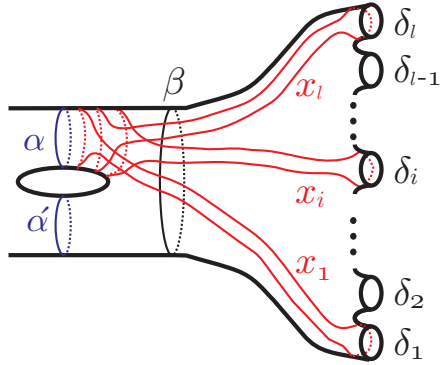}
      \caption{The curves $\alpha, \alpha^\prime,\beta$ and $x_1,x_2,\ldots,x_l$.}
      \label{idea}
\end{figure}

\begin{lemma}\label{sections1}
Suppose that the following relation holds in $\Gamma_g^n$:
\begin{align*}
U \cdot t_\beta = T \cdot t_\alpha^{l-1} t_{\alpha^\prime},
\end{align*}
where $U$ and $T$ are elements in $\Gamma_g^n$. 
Then, the following relation holds in $\Gamma_g^n$:
\begin{align*}
U \cdot t_{\delta_1} t_{\delta_2} \cdots t_{\delta_l}=T \cdot t_{x_1}\cdots t_{x_l}.
\end{align*}
\end{lemma}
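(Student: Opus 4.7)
The plan is to reduce the lemma to the daisy relation on an embedded $(l+2)$-holed sphere, followed by elementary algebra that exploits the centrality of boundary Dehn twists.

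Specifically, I would consider the planar subsurface $\Sigma_0^{l+2} \subset \Sigma_g^n$ cobounded by $\alpha, \alpha', \delta_1, \ldots, \delta_l$, and apply the daisy relation to it with $\alpha$ playing the role of the central boundary (so $p = l$). Reading off the $p+1 = l+1$ interior curves from Figure~\ref{idea}, the petals matched with the outer boundaries $\delta_1, \ldots, \delta_l$ are exactly $x_1, \ldots, x_l$, while the petal matched with the outer boundary $\alpha'$ is the curve $\beta$ itself: in the sphere $\Sigma_0^{l+2}$, the separating curve $\beta$ cuts off a pair-of-pants with boundary $\{\alpha, \alpha', \beta\}$ from an $(l+1)$-holed sphere with boundary $\{\beta, \delta_1, \ldots, \delta_l\}$, which is precisely what characterizes the daisy curve encircling $\alpha$ and $\alpha'$. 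Combining the daisy relation with the fact that $t_{\alpha'}$ commutes with each $t_{\delta_i}$ (disjoint curves) then yields the master identity
\[
t_\alpha^{l-1}\, t_{\alpha'} \cdot t_{\delta_1} \cdots t_{\delta_l} \,=\, t_{x_1} \cdots t_{x_l} \cdot t_\beta.
\]

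Granting this, the derivation of the lemma is purely algebraic. Starting from the hypothesis $U \cdot t_\beta = T \cdot t_\alpha^{l-1} t_{\alpha'}$, I would right-multiply both sides by $t_\beta^{-1} \cdot t_{\delta_1} \cdots t_{\delta_l}$ to get
\[
U \cdot t_{\delta_1} \cdots t_{\delta_l} \,=\, T \cdot t_\alpha^{l-1} t_{\alpha'} \cdot t_\beta^{-1} \cdot t_{\delta_1} \cdots t_{\delta_l}.
\]
Because each boundary-parallel Dehn twist $t_{\delta_i}$ is central in $\Gamma_g^n$, the factor $t_\beta^{-1}$ commutes past $t_{\delta_1} \cdots t_{\delta_l}$ to the far right. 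Applying the master identity to the resulting block $t_\alpha^{l-1} t_{\alpha'} \cdot t_{\delta_1} \cdots t_{\delta_l}$ converts the right-hand side into $T \cdot t_{x_1} \cdots t_{x_l} \cdot t_\beta t_\beta^{-1}$, and the paired twists cancel to produce the desired $U \cdot t_{\delta_1} \cdots t_{\delta_l} = T \cdot t_{x_1} \cdots t_{x_l}$.

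The only step that requires genuine thought is the geometric identification of $\beta$ as the daisy petal associated with $\alpha'$ on the $(l+2)$-holed sphere bounded by $\alpha, \alpha', \delta_1, \ldots, \delta_l$, i.e.\ verifying that the configuration of curves in Figure~\ref{idea} matches the daisy template. Once that correspondence is set up, the rest is formal manipulation using centrality of boundary twists.
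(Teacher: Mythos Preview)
Your proposal is correct and follows essentially the same approach as the paper: both arguments hinge on recognizing that on the $(l+2)$-holed sphere bounded by $\alpha,\alpha',\delta_1,\ldots,\delta_l$ (with $\alpha$ central) the daisy relation reads $t_\alpha^{l-1}\,t_{\delta_1}\cdots t_{\delta_l}\,t_{\alpha'} = t_{x_1}\cdots t_{x_l}\,t_\beta$, then combine this with centrality of the $t_{\delta_i}$ and cancel $t_\beta$. The only cosmetic difference is that the paper right-multiplies by $t_{\delta_1}\cdots t_{\delta_l}$ and cancels $t_\beta$ at the end, whereas you right-multiply by $t_\beta^{-1}\,t_{\delta_1}\cdots t_{\delta_l}$ and cancel $t_\beta t_\beta^{-1}$.
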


\noindent This is a generalization of a technical lemma from \cite{BM}, which we provide a different proof of below.

\begin{proof}
We multiply both sides of the relation $U \cdot t_\beta = T \cdot t_\alpha^{l-1} t_{\alpha^\prime}$ by $\delta_1\delta_2\cdots \delta_l$, we obtain the following relation:
\begin{align*}
U \cdot t_\beta t_{\delta_1} t_{\delta_2} \cdots t_{\delta_l} = T \cdot t_\alpha^{l-1} t_{\alpha^\prime} t_{\delta_1} t_{\delta_2} \cdots t_{\delta_l}.
\end{align*}
Since $t_{\delta_1}, t_{\delta_2}, \ldots, t_{\delta_l}$ are elements in the center of $\Gamma_g^n$, we can rewrite this relation as follows:
\begin{align*}
U \cdot t_{\delta_1} t_{\delta_2} \cdots t_{\delta_l} t_\beta = T \cdot t_\alpha^{l-1} t_{\delta_1} t_{\delta_2} \cdots t_{\delta_l} t_{\alpha^\prime}.
\end{align*}
Here, by the daisy relation $t_\alpha^{l-1} t_{\delta_1} t_{\delta_2} \cdots t_{\delta_l} t_{\alpha^\prime} = t_{x_1} t_{x_2} \cdots t_{x_l} t_\beta$, we have 
\begin{align*}
U \cdot t_{\delta_1} t_{\delta_2} \cdots t_{\delta_l} t_\beta = T \cdot t_{x_1} t_{x_2} \cdots t_{x_l} t_\beta.
\end{align*}
Removing $t_\beta$ from both sides of this relation we get the desired relation.
\end{proof}

Let $d_j$, $e_j$ $(j=4,5,\ldots,2g+1)$, $f_h$ $(h=6,7,8,9)$ be the simple closed curves on $\Sigma_g^2$ as in Figure~\ref{curves7} which are defined by 
\begin{align*}
&d_j=t_{c_{j-3}}^{-1}t_{c_{j-2}}^{-1}t_{c_{j-1}}^{-1}(c_j),& 
&e_j=t_{c_{j-3}} t_{c_{j-2}} t_{c_{j-1}}(c_j),& \\
&f_h=t_{c_{h-5}}t_{c_{h-4}}t_{c_{h-3}}t_{c_{h-2}}t_{c_{h-1}}(c_h).& 
\end{align*}

\begin{figure}[ht]
\centering
\includegraphics[scale=.70]{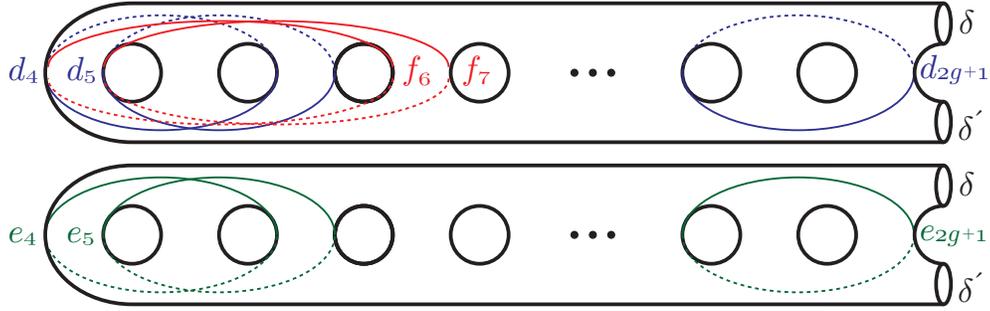}
\caption{The curves $d_j,e_j$ $(j=4,5,\ldots,2g+1)$ and $f_h$ $(h=6,7,8,9)$ on $\Sigma_g^2$.}
\label{curves7}
\end{figure}

Let $i,l,m$ be positive integers such that $l+1 \leq i \leq m-1$, the following relations hold from the braid relations:
\begin{align}
&t_{c_{i-1}} \cdot t_{c_m} t_{c_{m-1}} \cdots t_{c_l} = t_{c_m} t_{c_{m-1}} \cdots t_{c_l} \cdot t_{c_i}, \label{hurwitz1}\\
&t_{c_l} t_{c_{l+1}} \cdots t_{c_m} \cdot t_{c_{i-1}} = t_{c_i} \cdot t_{c_l} t_{c_{l+1}} \cdots t_{c_m}. \label{hurwitz2}
\end{align}

Next is a lemma from \cite{BM}, which we include the proof of here for completeness.

\begin{lemma}[\cite{BM}]
For $k=1,2,\ldots,2g-2$, the following relations hold in $\Sigma_g^2$: 
\begin{align}
&\prod_{i=k}^{2g-2} t_{c_{i+3}} t_{c_{i+2}} t_{c_{i+1}} t_{c_i} = (t_{c_{k+2}} t_{c_{k+1}} t_{c_k})^{2g-1-k} t_{d_{k+3}} t_{d_{k+4}} \cdots t_{d_{2g+1}}, \label{equationa} \\
&\prod_{i=2g-2}^k t_{c_i} t_{c_{i+1}} t_{c_{i+2}} t_{c_{i+3}} = t_{e_{2g+1}} \cdots t_{e_{k+4}} t_{e_{k+3}} (t_{c_k} t_{c_{k+1}} t_{c_{k+2}})^{2g-1-k}, \label{equationb} \\
&\prod_{i=4}^1 t_{c_i} t_{c_{i+1}} t_{c_{i+2}} t_{c_{i+3}} t_{c_{i+4}} t_{c_{i+5}} = t_{f_9} t_{f_8} t_{f_7} t_{f_6} (t_{c_1} t_{c_2} t_{c_3} t_{c_4} t_{c_5})^4. \label{equationc} 
\end{align}
\end{lemma}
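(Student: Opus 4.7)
The plan is to prove each of (\ref{equationa}), (\ref{equationb}), (\ref{equationc}) by downward induction on the starting index of the outer product, peeling off one conjugated twist $t_{d_j}$, $t_{e_j}$, or $t_{f_h}$ per step and using the Hurwitz relations (\ref{hurwitz1})--(\ref{hurwitz2}) together with the conjugation identities that \emph{define} $d_j$, $e_j$, and $f_h$.

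For (\ref{equationa}), I would induct on $k$ from $2g-2$ down to $1$. The base case $k=2g-2$ is just the conjugation identity
\[
t_{d_{2g+1}}=(t_{c_{2g}}t_{c_{2g-1}}t_{c_{2g-2}})^{-1}\,t_{c_{2g+1}}\,(t_{c_{2g}}t_{c_{2g-1}}t_{c_{2g-2}}),
\]
rearranged to match the claim. For the inductive step, writing the left hand side for index $k$ as (a fresh 4-block) $\cdot$ (left hand side for $k+1$), plugging in the inductive hypothesis, and substituting $t_{d_{k+3}}$ by its conjugation definition, the problem collapses to the chain-algebra identity
\[
\bigl(t_{c_{k+3}}t_{c_{k+2}}t_{c_{k+1}}t_{c_k}\bigr)\bigl(t_{c_{k+3}}t_{c_{k+2}}t_{c_{k+1}}\bigr)^{N}=\bigl(t_{c_{k+2}}t_{c_{k+1}}t_{c_k}\bigr)^{N}\bigl(t_{c_{k+3}}t_{c_{k+2}}t_{c_{k+1}}t_{c_k}\bigr),\qquad N=2g-2-k,
\]
which asserts that conjugation by the 4-chain $t_{c_{k+3}}t_{c_{k+2}}t_{c_{k+1}}t_{c_k}$ sends the 3-subchain $t_{c_{k+3}}t_{c_{k+2}}t_{c_{k+1}}$ to $t_{c_{k+2}}t_{c_{k+1}}t_{c_k}$. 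A secondary induction on $N$ reduces this to the single case $N=1$, which is an explicit short calculation using only the braid and commutation relations among the four adjacent chain generators.

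For (\ref{equationb}), I would either run the analogous descending induction in which the peeled terms $t_{e_j}$ emerge on the \emph{left} (using now relation (\ref{hurwitz2}) in place of (\ref{hurwitz1})), or more economically invert both sides of (\ref{equationa}), relabel, and use that $e_j$ is the image of $d_j$ under the obvious involution swapping the two ends of the chain $(t_{c_{j-3}}t_{c_{j-2}}t_{c_{j-1}}$ versus $t_{c_{j-3}}^{-1}t_{c_{j-2}}^{-1}t_{c_{j-1}}^{-1})$. For (\ref{equationc}), the same induction template applies: the LHS decomposes into four 6-blocks, and one strips off $t_{f_9},t_{f_8},t_{f_7},t_{f_6}$ one at a time from the left --- each arising from the defining conjugation $f_h=t_{c_{h-5}}\cdots t_{c_{h-1}}(c_h)$ --- while consolidating the residue into $(t_{c_1}t_{c_2}t_{c_3}t_{c_4}t_{c_5})^{4}$.

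The main obstacle is purely notational: at every application of (\ref{hurwitz1})--(\ref{hurwitz2}) one must verify the index constraint $l+1\le i\le m-1$ and keep track of which conjugated curve has emerged. Once this bookkeeping is organized (and the $N=1$ identity is isolated as a reusable lemma), the argument is essentially mechanical and no new conceptual ingredient enters.
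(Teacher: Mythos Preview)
Your proposal is correct and follows essentially the same route as the paper: downward induction on $k$, peeling off one conjugated twist per step via the Hurwitz relations (\ref{hurwitz1})--(\ref{hurwitz2}) and the defining conjugation identities for $d_j$, $e_j$, $f_h$; the key chain-shift identity you isolate is exactly the step the paper invokes as ``by (\ref{hurwitz1}).'' The only minor addition is your alternative derivation of (\ref{equationb}) by applying an orientation-reversing symmetry to (\ref{equationa}), which is a valid shortcut the paper does not use (it runs the parallel induction instead).
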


\begin{proof}
First, we prove the relation (\ref{equationa}) by induction on $2g-1-k$. Suppose that $k=2g-2$. Then, we have 
\begin{align*}
t_{c_{2g+1}} \cdot t_{c_{2g}} t_{c_{2g-1}} t_{c_{2g-2}} = t_{c_{2g}} t_{c_{2g-1}} t_{c_{2g-2}} \cdot t_{d_{2g+1}}.
\end{align*}
Therefore, the conclusion of the relation holds for $k=1$. Let us assume, inductively, that the relation holds for $k+1<2g-2$. By (\ref{hurwitz1}), we have 
\begin{align*}
&\prod_{i=k}^{2g-2} t_{c_{i+3}} t_{c_{i+2}} t_{c_{i+1}} t_{c_i} = t_{c_{k+3}} t_{c_{k+2}} t_{c_{k+1}} t_{c_k} \cdot \prod_{i=k+1}^{2g-2} t_{c_{i+3}} t_{c_{i+2}} t_{c_{i+1}} t_{c_i} \\
&= t_{c_{k+3}} t_{c_{k+2}} t_{c_{k+1}} t_{c_k} \cdot (t_{c_{k+3}} t_{c_{k+2}} t_{c_{k+1}})^{2g-1-(k+1)} t_{d_{k+4}} t_{d_{k+5}} \cdots t_{d_{2g+1}} \\
&= (t_{c_{k+2}} t_{c_{k+1}} t_{c_k})^{2g-1-(k+1)} \cdot t_{c_{k+3}} t_{c_{k+2}} t_{c_{k+1}} t_{c_k} \cdot t_{d_{k+4}} t_{d_{k+5}} \cdots t_{d_{2g+1}} \\
&= (t_{c_{k+2}} t_{c_{k+1}} t_{c_k})^{2g-1-(k+1)} \cdot t_{c_{k+2}} t_{c_{k+1}} t_{c_k} \cdot t_{d_{k+3}} \cdot t_{d_{k+4}} t_{d_{k+5}} \cdots t_{d_{2g+1}}. 
\end{align*}
Hence, the relation (\ref{equationa}) is proved. 

Next, we prove the relation (\ref{equationb}) by induction on $2g-1-k$. Suppose that $k=2g-2$. Then, we have 
\begin{align*}
t_{c_{2g-2}} t_{c_{2g-1}} t_{c_{2g}} \cdot t_{c_{2g+1}} = t_{e_{2g+1}} \cdot t_{c_{2g-2}} t_{c_{2g-1}} t_{c_{2g}}. 
\end{align*}
Therefore, the conclusion of the relation holds for $k=2g-2$. Let us assume, inductively, that the relation holds for $k+1<2g-2$. By (\ref{hurwitz2}), we have 
\begin{align*}
&\prod_{i=2g-2}^k t_{c_i} t_{c_{i+1}} t_{c_{i+2}} t_{c_{i+3}} = \left(\prod_{i=2g-2}^{k+1} t_{c_i} t_{c_{i+1}} t_{c_{i+2}} t_{c_{i+3}}\right) \cdot t_{c_k} t_{c_{k+1}} t_{c_{k+2}} t_{c_{k+3}} \\
& = t_{e_{2g+1}} \cdots t_{e_{k+5}} t_{e_{k+4}} (t_{c_{k+1}} t_{c_{k+2}} t_{c_{k+3}})^{2g-2-k} \cdot t_{c_k} t_{c_{k+1}} t_{c_{k+2}} t_{c_{k+3}} \\
& = t_{e_{2g+1}} \cdots t_{e_{k+5}} t_{e_{k+4}} \cdot t_{c_k} t_{c_{k+1}} t_{c_{k+2}} t_{c_{k+3}} \cdot (t_{c_k} t_{c_{k+1}} t_{c_{k+2}})^{2g-2-k} \\
& = t_{e_{2g+1}} \cdots t_{e_{k+5}} t_{e_{k+4}} \cdot t_{e_{k+3}} \cdot t_{c_k} t_{c_{k+1}} t_{c_{k+2}} \cdot (t_{c_k} t_{c_{k+1}} t_{c_{k+2}})^{2g-2-k}. 
\end{align*}
Hence we obtain the relation~(\ref{equationb}). 
The proof for the relation (\ref{equationc}) is very similar, and we will leave it to the reader.
\end{proof}

Let
\begin{align*}
&H_i:=t_{c_1} t_{c_2} \cdots t_{c_i},& && &\overline{H}_i:=t_{c_i} \cdots t_{c_2} t_{c_1}& \\
&I_{2g-8}:=t_{d_{10}} t_{d_{11}} \cdots t_{d_{2g+1}},& && &J_{2g-6}:=t_{e_{2g+1}} \cdots t_{e_9} t_{e_8},& \\
&K_4:=t_{f_9} t_{f_8} t_{f_7} t_{f_6},& &\mathrm{and}& & L_{16}:=\prod_{i=1}^4 t_{c_{i+3}} t_{c_{i+2}} t_{c_{i+1}} t_{c_i}.& 
\end{align*}

\begin{lemma}\label{lem1}
For $g\geq 4$, the following relation holds in $\Sigma_g^2$:
\begin{align*}
(t_{c_1} t_{c_2} \cdots t_{c_{2g+1}})^4 = \left(H_3\right)^4 \left(\overline{H}_3\right)^{2g-8} K_4 \left(H_5\right)^4 I_{2g-8}. 
\end{align*}
\end{lemma}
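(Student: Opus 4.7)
The plan is to prove Lemma~\ref{lem1} by direct computation in $\Gamma_g^2$, combining the Hurwitz/braid relations (\ref{hurwitz1})--(\ref{hurwitz2}) with the three chain-derived identities (\ref{equationa}) and (\ref{equationc}) established just above. As a first sanity check, the left-hand side $(H_{2g+1})^4$ has $4(2g+1) = 8g+4$ Dehn twists, while the right-hand side has $12 + 3(2g-8) + 4 + 20 + (2g-8) = 8g+4$ twists; this at least confirms that the identity is admissible as a word equality.

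For the base case $g=4$ the formula collapses to $(H_9)^4 = (H_3)^4 K_4 (H_5)^4$, since $(\overline{H}_3)^0 = I_0 = 1$. By (\ref{equationc}) this reduces to showing
\[
(H_9)^4 = (H_3)^4 \cdot \prod_{i=4}^1 t_{c_i} t_{c_{i+1}} t_{c_{i+2}} t_{c_{i+3}} t_{c_{i+4}} t_{c_{i+5}},
\]
which I would check by repeatedly applying the commutation $t_{c_i} t_{c_j} = t_{c_j} t_{c_i}$ (valid whenever $|i-j|\geq 2$) to shuffle the twists $t_{c_1}, t_{c_2}, t_{c_3}$ to the front of the word $H_9 H_9 H_9 H_9$, together with the braid relation $t_{c_i} t_{c_{i+1}} t_{c_i} = t_{c_{i+1}} t_{c_i} t_{c_{i+1}}$ at the unavoidable interfaces.

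For $g \geq 5$ the cleanest route is induction on $g$. The inductive step passes from the identity for $g-1$ on $\Sigma_{g-1}^2$ to the identity for $g$ on $\Sigma_g^2$ by appending the pair $t_{c_{2g}} t_{c_{2g+1}}$ to each of the four copies of $H_{2g-1}$ inside $(H_{2g+1})^4 = \big(H_{2g-1} \cdot t_{c_{2g}} t_{c_{2g+1}}\big)^4$. Since $t_{c_{2g}}$ and $t_{c_{2g+1}}$ commute with $t_{c_1}, t_{c_2}, t_{c_3}$ and with $t_{c_j}$ for $j \leq 2g-2$ except at the immediate interface with $t_{c_{2g-1}}$, most of the commutations needed to bring the four new pairs into position are free, and the nontrivial interactions are precisely those controlled by (\ref{hurwitz1})--(\ref{hurwitz2}). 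After collecting the four new pairs next to the existing tail and applying one further instance of relation (\ref{equationa}), the existing $I_{2g-10}$ absorbs two more $d$-twists and becomes $I_{2g-8}$, while the power $(\overline{H}_3)^{2g-10}$ acquires two more copies and becomes $(\overline{H}_3)^{2g-8}$; the middle block $K_4 (H_5)^4$ and the initial $(H_3)^4$ are left unchanged because the new twists commute past them.

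The main obstacle is the careful bookkeeping of braid commutations in the inductive step: one must verify that the single extra application of (\ref{equationa}) that absorbs the freshly appended pair $t_{c_{2g}} t_{c_{2g+1}}$ really does produce the predicted increment in $(\overline{H}_3)^{2g-8}$ and in $I_{2g-8}$ without disturbing the middle factor $K_4 (H_5)^4$. This is largely a matter of matching indices using (\ref{hurwitz1})--(\ref{hurwitz2}), but it is where the proof spends most of its effort, and it is the reason one works with the specific sub-chain identities (\ref{equationa})--(\ref{equationc}) rather than attempting a fully global rearrangement of $(H_{2g+1})^4$.
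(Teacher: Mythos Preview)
Your plan differs from the paper's argument: you propose induction on $g$, whereas the paper gives a direct proof for all $g\geq 4$ at once. The paper first rewrites $(H_{2g+1})^4$ via the standard staircase identity as $(H_3)^4\prod_{i=1}^{2g-2} t_{c_{i+3}}t_{c_{i+2}}t_{c_{i+1}}t_{c_i}$, then applies (\ref{equationa}) with $k=7$ to the tail $\prod_{i=7}^{2g-2}$, producing $(t_{c_9}t_{c_8}t_{c_7})^{2g-8}\,I_{2g-8}$. The block $(t_{c_9}t_{c_8}t_{c_7})^{2g-8}$ is then pushed left through the fixed $24$-letter word $\prod_{i=1}^{6}t_{c_{i+3}}t_{c_{i+2}}t_{c_{i+1}}t_{c_i}$ using (\ref{hurwitz1}), which shifts every index down by $6$ and turns it into $(\overline{H}_3)^{2g-8}$; finally (\ref{equationc}) identifies that $24$-letter word with $K_4(H_5)^4$.

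Your inductive scheme can be made to work, but the step as written has a genuine error. After you append the four pairs $t_{c_{2g}}t_{c_{2g+1}}$ and push them to the far right, the correct identity (a consequence of (\ref{equationa}) at $k=7$ for $g$ and $g-1$) is
\[
I_{2g-10}\cdot\big(t_{c_{2g}}t_{c_{2g-1}}t_{c_{2g-2}}t_{c_{2g-3}}\big)\big(t_{c_{2g+1}}t_{c_{2g}}t_{c_{2g-1}}t_{c_{2g-2}}\big)=(t_{c_9}t_{c_8}t_{c_7})^{2}\,I_{2g-8}.
\]
The block $(t_{c_9}t_{c_8}t_{c_7})^{2}$ now sits \emph{between} $K_4(H_5)^4$ and $I_{2g-8}$, and it does \emph{not} commute with $K_4(H_5)^4$: the curves $c_7,c_8,c_9$ meet the $f_h$ and the $c_1,\dots,c_5$ in that block. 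What actually happens is a Hurwitz transport via (\ref{hurwitz1}) through $K_4(H_5)^4=\prod_{i=1}^{6}t_{c_{i+3}}t_{c_{i+2}}t_{c_{i+1}}t_{c_i}$, which lowers each index by $6$ and converts $(t_{c_9}t_{c_8}t_{c_7})^{2}$ into $(\overline{H}_3)^{2}$. So your sentence ``the middle block $K_4(H_5)^4$ \dots\ [is] left unchanged because the new twists commute past'' is false as stated; the block is left unchanged, but only because the Hurwitz move reshapes the passing word, not because of commutation. Once you replace that commutation claim with the Hurwitz shift, your induction goes through --- but notice that at that point you are essentially replaying, step by step, the inductive proof of (\ref{equationa}) together with the same Hurwitz transport the paper uses, so the direct argument is both shorter and closer to what you end up doing anyway.
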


\begin{proof}
It is easy to check that from the braid relations we have 
\begin{align*}
(t_{c_1} t_{c_2} \cdots t_{c_{2g+1}})^4 &= (t_{c_1} t_{c_2} t_{c_3})^4 \prod_{i=4}^1 t_{c_i} \cdots t_{c_{i+2g-3}} \\
&= \left(H_3\right)^4 \prod_{i=1}^{2g-2} t_{c_{i+3}} t_{c_{i+2}} t_{c_{i+1}} t_{c_i}. 
\end{align*}
By the relation (\ref{equationa}) for $k=7$ repeating (\ref{hurwitz1}), we obtain
\begin{align*}
(t_{c_1} t_{c_2} \cdots t_{c_{2g+1}})^4 &= \left(H_3\right)^4  \left(\prod_{i=1}^{6} t_{c_{i+3}} t_{c_{i+2}} t_{c_{i+1}} t_{c_i}\right) (t_{c_9} t_{c_8} t_{c_7})^{2g-8} I_{2g-8} \\ 
&= \left(H_3\right)^4 (t_{c_3} t_{c_2} t_{c_1})^{2g-8} \left(\prod_{i=1}^{6} t_{c_{i+3}} t_{c_{i+2}} t_{c_{i+1}} t_{c_i}\right) I_{2g-8}. 
\end{align*}
Since it is easy to check that from the braid relations we have 
\begin{align*}
\prod_{i=1}^{6} t_{c_{i+3}} t_{c_{i+2}} t_{c_{i+1}} t_{c_i} = \prod_{i=4}^1 t_{c_i} t_{c_{i+1}} t_{c_{i+2}} t_{c_{i+3}} t_{c_{i+4}} t_{c_{i+5}}, 
\end{align*}
by (\ref{equationc}), we obtain the desired relation.
\end{proof}

\begin{lemma}\label{lem2}
For $g\geq 4$, the following relation holds in $\Sigma_g^2$:
\begin{align*}
(t_{c_1} t_{c_2} \cdots t_{c_{2g+1}})^{2g-2} &= J_{2g-6} L_{16} \left(H_3\right)^{2g-6} t_{d^\prime} t_{e^\prime}. 
\end{align*}
\end{lemma}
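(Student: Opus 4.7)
The proof should follow the template of Lemma 1's proof, using iterated braid relations together with the identities (\ref{equationa})--(\ref{equationc}) to transform $(t_{c_1} t_{c_2} \cdots t_{c_{2g+1}})^{2g-2}$ into the desired form. The key difference from Lemma 1 is that the target factorization ends with the chain relation $t_{d'} t_{e'} = (t_{c_1} t_{c_2} t_{c_3} t_{c_4} t_{c_5})^6 = (H_5)^6$ for the length-$5$ chain $c_1, \ldots, c_5$, rather than the $I_{2g-8}$ tail appearing in Lemma 1. Thus the plan is to first produce a factorization of the form (stuff)$\cdot (H_5)^6$ and then apply the chain relation at the very end.

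First I would apply braid relations to rewrite $(H_{2g+1})^{2g-2}$ in a form analogous to the opening expansion from Lemma 1's proof, grouping the Dehn twists about the high-index curves $c_8, \ldots, c_{2g+1}$ into a single block of the shape $\prod_{i=2g-2}^{5} t_{c_i} t_{c_{i+1}} t_{c_{i+2}} t_{c_{i+3}}$. Next, applying (\ref{equationb}) with $k = 5$ converts this block into $J_{2g-6} \cdot (t_{c_5} t_{c_6} t_{c_7})^{2g-6}$, isolating the $J_{2g-6}$ factor. Using (\ref{hurwitz1}) repeatedly, $J_{2g-6}$ commutes past the remaining twists along low-index curves (which are disjoint from $c_8, \ldots, c_{2g+1}$), and so can be transported to the leftmost position.

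Once $J_{2g-6}$ sits at the left, the remaining product involves only Dehn twists about $c_1, \ldots, c_7$. Via further braid manipulations, these rearrange into the specific $L_{16}$ block, followed by $(H_3)^{2g-6}$, and finally $(H_5)^6$ at the right end. Applying the chain relation $(H_5)^6 = t_{d'} t_{e'}$ then converts this tail into $t_{d'} t_{e'}$, yielding the desired form $J_{2g-6} L_{16} (H_3)^{2g-6} t_{d'} t_{e'}$.

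The main obstacle is orchestrating the braid-relation moves in exactly the right sequence so that the four blocks $J_{2g-6}$, $L_{16}$, $(H_3)^{2g-6}$, and $t_{d'} t_{e'}$ emerge in the prescribed left-to-right order. Identifying the precise $L_{16}$ block from the residual product on $c_1, \ldots, c_7$ requires careful bookkeeping of the index shifts that each commutation via (\ref{hurwitz1}) and (\ref{hurwitz2}) introduces; this is the most delicate part of the calculation, and the place where one must proceed by analogy with the $\prod_{i=1}^6 t_{c_{i+3}} \cdots t_{c_i}$ rearrangement used at the end of the Lemma 1 proof.
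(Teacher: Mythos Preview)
Your proposal contains a concrete error that derails the argument: you have misidentified the chain relation producing $t_{d'}t_{e'}$. The curves $d',e'$ in Figure~\ref{curves1} bound a regular neighborhood of the chain $c_5,c_6,\ldots,c_{2g+1}$ (of length $2g-3$), so the correct relation is
\[
t_{d'}t_{e'} = (t_{c_5}t_{c_6}\cdots t_{c_{2g+1}})^{2g-2},
\]
not $(H_5)^6$. (The curves bounding the neighborhood of $c_1,\ldots,c_5$ are called $a,b$ in the paper.) Consequently, the plan of manufacturing an $(H_5)^6$ tail and converting it to $t_{d'}t_{e'}$ cannot succeed.

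With the correct chain relation the organization of the proof is actually simpler than what you outline. One first uses braid relations to write
\[
(t_{c_1}\cdots t_{c_{2g+1}})^{2g-2} = \Bigl(\prod_{i=2g-2}^{1} t_{c_i}t_{c_{i+1}}t_{c_{i+2}}t_{c_{i+3}}\Bigr)\,(t_{c_5}\cdots t_{c_{2g+1}})^{2g-2},
\]
so that $t_{d'}t_{e'}$ appears immediately on the right. Applying (\ref{equationb}) with $k=5$ to the sub-product $\prod_{i=2g-2}^{5}$ produces $J_{2g-6}$ already on the \emph{left}, followed by $(t_{c_5}t_{c_6}t_{c_7})^{2g-6}$; there is no need to commute $J_{2g-6}$ past anything (and your claimed disjointness of the $e_j$ from low-index $c_i$ is false, since e.g.\ $e_8$ meets $c_5,c_6,c_7$). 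Finally, one uses (\ref{hurwitz2}) to slide $(t_{c_5}t_{c_6}t_{c_7})^{2g-6}$ rightward through the remaining block $\prod_{i=4}^{1} t_{c_i}t_{c_{i+1}}t_{c_{i+2}}t_{c_{i+3}}$, turning it into $(H_3)^{2g-6}$, and identifies that block with $L_{16}$ via the braid identity $\prod_{i=4}^{1} t_{c_i}t_{c_{i+1}}t_{c_{i+2}}t_{c_{i+3}} = \prod_{i=1}^{4} t_{c_{i+3}}t_{c_{i+2}}t_{c_{i+1}}t_{c_i}$.
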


\begin{proof}
From the braid relations we have
\begin{align*}
&(t_{c_1}t_{c_2}  \cdots t_{c_{2g+1}})^{2g-2} = \left(\prod_{i=2g-2}^1 t_{c_i} t_{c_{i+1}} t_{c_{i+2}} t_{c_{i+3}} \right) (t_{c_5} t_{c_6} \cdots t_{c_{2g+1}})^{2g-2}. 
\end{align*}
By the chain relation $t_{d^\prime} t_{e^\prime} =(t_{c_5}t_{c_6}\cdots t_{c_{2g+1}})^{2g-2}$, the relation (\ref{equationb}) for $k=5$ and repeating (\ref{hurwitz2}), 
\begin{align*}
(t_{c_1} t_{c_2}  \cdots t_{c_{2g+1}})^{2g-2} &= J_{2g-6} (t_{c_5} t_{c_6} t_{c_7})^{2g-6} \left( \prod_{i=4}^1 t_{c_i} t_{c_{i+1}} t_{c_{i+2}} t_{c_{i+3}} \right) t_{d^\prime} t_{e^\prime} \\
&= J_{2g-6} \left( \prod_{i=4}^1 t_{c_i} t_{c_{i+1}} t_{c_{i+2}} t_{c_{i+3}} \right) (t_{c_1} t_{c_2} t_{c_3})^{2g-6} t_{d^\prime} t_{e^\prime}.
\end{align*}
Since it is easy to check that from the braid relations we have 
\begin{align*}
\prod_{i=4}^1 t_{c_i} t_{c_{i+1}} t_{c_{i+2}} t_{c_{i+3}} = \prod_{i=1}^4 t_{c_{i+3}} t_{c_{i+2}} t_{c_{i+1}} t_{c_i}, 
\end{align*}
we obtain the relation in the statement.
\end{proof}

\begin{proposition}\label{prop1}
Let $g\geq 4$. 
Then, the following relation holds in $\Sigma_g^2$:
If $g$ is even, then we have 
\begin{align*}
t_\delta t_{\delta^\prime} &= K_4 \left(H_5\right)^4 I_{2g-8} J_{2g-6} L_{16} \left(H_3\right)^2 t_d^{g-3} t_{d^\prime} t_e^{g-3} t_{e^\prime}.
\end{align*}
If $g$ is odd, then we have 
\begin{align*}
t_\delta t_{\delta^\prime} &= K_4 \left(H_5\right)^4 I_{2g-8} J_{2g-6} L_{16} \left(\overline{H}_3\right)^2 t_d^{g-3} t_{d^\prime} t_e^{g-3} t_{e^\prime}.
\end{align*}
\end{proposition}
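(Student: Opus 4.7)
The plan is to combine the chain relation $t_\delta t_{\delta^\prime} = (t_{c_1} t_{c_2} \cdots t_{c_{2g+1}})^{2g+2}$ with the factorizations supplied by Lemmas~\ref{lem1} and~\ref{lem2}, and then move factors around using a mixture of local commutations (among disjoint curves) and a global cyclic-rotation trick (justified by centrality of $t_\delta t_{\delta^\prime}$). Writing $W = t_{c_1} t_{c_2} \cdots t_{c_{2g+1}}$ and splitting $W^{2g+2} = W^4 \cdot W^{2g-2}$, substitution of the two lemmas yields
\begin{equation*}
t_\delta t_{\delta^\prime} = (H_3)^4 \, (\overline{H}_3)^{2g-8} \, K_4 \, (H_5)^4 \, I_{2g-8} \, J_{2g-6} \, L_{16} \, (H_3)^{2g-6} \, t_{d^\prime} t_{e^\prime}.
\end{equation*}

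The key arithmetic ingredient is the $h=1$ chain relation $(H_3)^4 = (\overline{H}_3)^4 = t_d t_e$, which lets me rewrite the large powers of $H_3$ and $\overline{H}_3$ in terms of $(t_d t_e)$, with a case split on the parity of $g$ dictated by the residues of $2g-8$ and $2g-6$ modulo $4$. For $g$ even I would write $(H_3)^4 (\overline{H}_3)^{2g-8} = (t_d t_e)^{(g-2)/2}$ and $(H_3)^{2g-6} = (H_3)^2 (t_d t_e)^{(g-4)/2}$; for $g$ odd I would write $(\overline{H}_3)^{2g-8} = (\overline{H}_3)^2 (t_d t_e)^{(g-5)/2}$ and $(H_3)^{2g-6} = (t_d t_e)^{(g-3)/2}$. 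In either case the total exponent of $(t_d t_e)$ adds up to $g-3$, matching the target. These rewrites use that $d, e$ are disjoint from $c_1, c_2, c_3$, so $t_d, t_e$ commute with $H_3$ and $\overline{H}_3$.

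To bring everything into the asserted order, I would invoke the centrality of $t_\delta t_{\delta^\prime}$ in $\Gamma_g^2$: whenever a product of mapping classes equals a central element, any cyclic rotation of the factors also equals that same central element. Rotating the leftmost $(t_d t_e)$-power to the right end of the word, then using the pairwise disjointness of $\{d,e,d^\prime,e^\prime\}$ (so that $t_d, t_e, t_{d^\prime}, t_{e^\prime}$ commute with each other) together with the commutation of $t_d, t_e$ with $(H_3)^2$ and $(\overline{H}_3)^2$, I would consolidate the two $(t_d t_e)$-powers into $(t_d t_e)^{g-3} = t_d^{g-3} t_e^{g-3}$, slide the $H_3^{\pm 2}$-factor into the position immediately after $L_{16}$, and finally interleave using $t_e^{g-3} t_{d^\prime} = t_{d^\prime} t_e^{g-3}$ to arrive at the block $t_d^{g-3} t_{d^\prime} t_e^{g-3} t_{e^\prime}$.

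The main obstacle is bookkeeping: tracking the parity cases correctly and making sure each commutation is justified by a genuine disjointness of curves. The nontrivial point is that $t_d$ and $t_e$ do \emph{not} commute with the middle block $K_4 (H_5)^4 I_{2g-8} J_{2g-6} L_{16}$, because its constituent curves involve $c_4, c_5, \dots$, which meet $d$ or $e$. Consequently no purely local commutation can carry $(t_d t_e)^{(g-2)/2}$ (or $(t_d t_e)^{(g-3)/2}$ in the odd case) from the left end to the right end; this is precisely the step where the cyclic-rotation trick, powered by the centrality of $t_\delta t_{\delta^\prime}$, is doing the essential work.
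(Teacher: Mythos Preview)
Your proposal is correct and follows essentially the same route as the paper. The paper also starts from the product of Lemmas~\ref{lem1} and~\ref{lem2}, then uses centrality of $t_\delta t_{\delta'}$ to cyclically move the leftmost block $(H_3)^4(\overline{H}_3)^{2g-8}$ to the right (phrased there as ``conjugation by $(H_3)^4(\overline{H}_3)^{2g-8}$''), commutes it past $t_{d'}t_{e'}$ using disjointness of $c_1,c_2,c_3$ from $d',e'$, and only \emph{then} applies the chain relation $t_d t_e=(H_3)^4=(\overline{H}_3)^4$ to the consolidated block $(H_3)^{2g-2}(\overline{H}_3)^{2g-8}$, splitting on parity. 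You reverse the order---reducing via the chain relation first and rotating afterward---which is a harmless rearrangement; the only cosmetic wrinkle is that in your odd case the leftmost surviving factor is $(\overline{H}_3)^2$ rather than a pure $(t_d t_e)$-power, so your ``rotate the leftmost $(t_d t_e)$-power'' sentence should really read ``rotate the leftmost block $(\overline{H}_3)^2(t_d t_e)^{(g-3)/2}$'' (or equivalently rotate $(\overline{H}_3)^2$ too and then commute it back leftward past $t_d,t_e,t_{d'},t_{e'}$, exactly as you indicate later).
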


\begin{proof}
By Lemma~\ref{lem1},~\ref{lem2} and the chain relation 
\begin{align*}
t_\delta t_{\delta^\prime} = (t_{c_1} t_{c_2} \cdots t_{c_{2g+1}})^{2g+2}=(t_{c_1} t_{c_2} \cdots t_{c_{2g+1}})^4 \cdot (t_{c_1} t_{c_2} \cdots t_{c_{2g+1}})^{2g-2}, 
\end{align*}
we have 
\begin{align*}
&t_\delta t_{\delta^\prime} = \left(H_3\right)^4 \left(\overline{H}_3\right)^{2g-8} K_4 \left(H_5\right)^4 I_{2g-8} J_{2g-6} L_{16} \left(H_3\right)^{2g-6} t_{d^\prime} t_{e^\prime}. 
\end{align*}
Since $c_1,c_2,c_3$ are disjoint from $d^\prime$ and $e^\prime$, by conjugation by $\left(H_3\right)^4 \left(\overline{H}_3\right)^{2g-8}$ we obtain 
\begin{align*}
t_\delta t_{\delta^\prime} &= K_4 \left(H_5\right)^4 I_{2g-8} J_{2g-6} L_{16} \left(H_3\right)^{2g-2} \left(\overline{H}_3\right)^{2g-8} t_{d^\prime} t_{e^\prime}. 
\end{align*}
The claim follows from this relation and the chain relations $t_d t_{e} = \left(H_3\right)^4 = \left(\overline{H}_3\right)^4$. 
\end{proof}

{
\begin{lemma}\label{long1}
Let $g\geq 4$. For any positive integer $m$, we have 
\begin{align*}
L_{16} \left(H_5\right)^4 = \phi_{12,m} T_{10m} M_9 \cdot t_{c_5} t_{c_3} t_{c_4} t_{c_2} t_{c_3},
\end{align*}
where $M_9 = t_{(t_{c_4} t_d t_{c_3} t_{c_4})^{-1}(c_5)} t_{c_2} t_{(t_{c_5}t_{c_4})^{-1}(c_6)} t_d t_{t_{c_4}^{-1}(c_3)} t_{c_7} t_{(t_{c_5}t_{c_4})^{-1}(c_6)} t_d t_{t_{c_4}^{-1}(e)}$. 
\end{lemma}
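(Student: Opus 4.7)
The plan is to first invoke the Baykur-Korkmaz-Monden relation (\ref{DKPrelation0}), which states $\phi_{12} = \phi_{12,m} \cdot T_{10m}$, and thereby reduce the lemma to the single $m$-independent identity
\[
L_{16}\,(H_5)^4 \;=\; \phi_{12}\cdot M_9 \cdot t_{c_5}t_{c_3}t_{c_4}t_{c_2}t_{c_3} \quad \text{in } \Gamma_g^2 .
\]
Once this is established, substituting $\phi_{12,m}T_{10m}$ for $\phi_{12}$ on the right-hand side immediately yields the statement of the lemma, for every positive integer $m$.

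To prove the $m$-independent identity, I would proceed by direct calculation in $\Gamma_g^2$ using only three ingredients: the braid relations (\ref{hurwitz1})--(\ref{hurwitz2}), the conjugation rule $t_a\, t_b = t_b\, t_{t_b^{-1}(a)}$, which is exactly the mechanism that produces the \emph{twisted} curves of the form $t_{\psi^{-1}(c)}$ appearing in $M_9$, and the chain relation $t_d t_e = (H_3)^4 = (t_{c_1}t_{c_2}t_{c_3})^4$. After expanding
\[
L_{16} = \prod_{i=1}^{4} t_{c_{i+3}}t_{c_{i+2}}t_{c_{i+1}}t_{c_i}, \qquad (H_5)^4 = (t_{c_1}t_{c_2}t_{c_3}t_{c_4}t_{c_5})^4,
\]
the idea is to peel off from the left the initial block
\[
t_{c_4}t_{c_3}t_{c_2}t_{c_1}\,t_{c_1}t_{c_2}t_{c_3}t_{c_4}\,t_{c_4}
\]
of $\phi_{12}$, by Hurwitz slides that create the double $t_{c_1}t_{c_1}$ and the double $t_{c_4}t_{c_4}$, then to isolate within the remaining word a copy of $(H_3)^4$ and replace it by $t_d t_e$. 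The $t_d$ together with the next two letters $t_{c_3}t_{c_4}$ completes $\phi_{12}$, and the leftover $t_e$, together with what remains of $L_{16}$ and $(H_5)^4$, is shuffled by further braid moves and conjugations into the form $M_9 \cdot t_{c_5}t_{c_3}t_{c_4}t_{c_2}t_{c_3}$.

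The main obstacle is purely bookkeeping: the ordering of the Hurwitz moves controls which particular representatives of the curves appear, and a delicate sequence is required to land precisely on the curves $(t_{c_4}t_d t_{c_3}t_{c_4})^{-1}(c_5)$, $(t_{c_5}t_{c_4})^{-1}(c_6)$, $t_{c_4}^{-1}(c_3)$, and $t_{c_4}^{-1}(e)$ that define $M_9$, rather than some equivalent but differently-labeled collection. Since each of the tools used is an honest equation in $\Gamma_g^2$ rather than in the free monoid of Dehn twists, there is no obstruction in principle, and the length discrepancy between the $36$ letters on the left and the $26$ letters of $\phi_{12}\cdot M_9 \cdot t_{c_5}t_{c_3}t_{c_4}t_{c_2}t_{c_3}$ on the right is precisely accounted for by the single application of the chain relation $(H_3)^4 = t_d t_e$, which trades $12$ twists for $2$. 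With this reduction performed correctly, the lemma follows from the Baykur-Korkmaz-Monden substitution.
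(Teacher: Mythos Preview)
Your plan is correct and matches the paper's proof essentially step for step: the paper also reduces to the $m$-independent identity $L_{16}(H_5)^4=\phi_{12}\cdot M_9\cdot t_{c_5}t_{c_3}t_{c_4}t_{c_2}t_{c_3}$ (applying~(\ref{DKPrelation0}) only at the very end), and establishes it by Hurwitz moves that isolate a copy of $(H_3)^4$, a single application of the chain relation $(H_3)^4=t_dt_e$, and a conjugation that packages the remainder into $M_9$. Your length count ($36=26+10$) is the same sanity check that governs the paper's computation.
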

\begin{proof}
By (\ref{hurwitz2}) and the braid relations, we have
\begin{align*}
L_{16} \left(H_5\right)^4 & = t_{c_4} t_{c_3} t_{c_2} t_{c_1} H_5 t_{c_4} t_{c_3} t_{c_2} t_{c_1} t_{t_{c_5}^{-1}(c_6)} t_{c_4} t_{c_3} t_{c_2} t_{c_7} t_{t_{c_5}^{-1}(c_6)} t_{c_4} t_{c_3} \left(H_5\right)^3 \\
& = t_{c_4} t_{c_3} t_{c_2} t_{c_1} H_5 t_{c_4} t_{c_3} t_{c_2} t_{t_{c_5}^{-1}(c_6)} t_{c_4} t_{c_3} t_{c_7} t_{t_{c_5}^{-1}(c_6)} t_{c_4} \cdot t_{c_1} t_{c_2} t_{c_3} \left(H_5\right)^3 \\
& = t_{c_4} t_{c_3} t_{c_2} t_{c_1} H_5 t_{c_4} t_{c_3} t_{c_2} t_{t_{c_5}^{-1}(c_6)} t_{c_4} t_{c_3} t_{c_7} t_{t_{c_5}^{-1}(c_6)} t_{c_4} \cdot (H_3)^4 t_{c_4} t_{c_5} t_{c_3} t_{c_4} t_{c_2} t_{c_3} .
\end{align*}
Here, by the chain relation $t_d t_e = (H_3)^4$, 
\begin{align*}
t_{c_2} t_{t_{c_5}^{-1}(c_6)} t_{c_4} t_{c_3} t_{c_7} t_{t_{c_5}^{-1}(c_6)} t_{c_4} (H_3)^4 t_{c_4} &= t_{c_2} t_{t_{c_5}^{-1}(c_6)} t_{c_4} t_{c_3} t_{c_7} t_{t_{c_5}^{-1}(c_6)} t_{c_4} t_d t_e t_{c_4} \\
&= t_{c_2} t_{t_{c_5}^{-1}(c_6)} t_{c_4} t_{c_3} t_{c_7} t_{t_{c_5}^{-1}(c_6)} t_{c_4} t_d t_{c_4} t_{t_{c_4}^{-1}(e)} \\
&= t_d t_{c_4} \cdot N_7 \cdot t_{c_4} t_{t_{c_4}^{-1}(e)}, 
\end{align*}
where $N_7 =  (t_d t_{c_4})^{-1} (t_{c_2} t_{t_{c_5}^{-1}(c_6)} t_{c_4} t_{c_3} t_{c_7} t_{t_{c_5}^{-1}(c_6)} t_{c_4}) (t_d t_{c_4})$. 
Note that it is easy to check that $N_7 = t_{c_2} t_{(t_{c_5}t_{c_4})^{-1}(c_6)} t_d t_{t_{c_4}^{-1}(c_3)} t_{c_7} t_{(t_{c_5}t_{c_4})^{-1}(c_6)} t_d$. 
Therefore, we have
\begin{align*}
L_{16} \left(H_5\right)^4 &= t_{c_4} t_{c_3} t_{c_2} t_{c_1} H_5 t_{c_4} t_{c_3} \cdot t_d t_{c_4} \cdot N_7 \cdot t_{c_4} t_{t_{c_4}^{-1}(e)} \cdot t_{c_5} t_{c_3} t_{c_4} t_{c_2} t_{c_3} \\
&= t_{c_4} t_{c_3} t_{c_2} t_{c_1} \cdot t_{c_1} t_{c_2} t_{c_3} t_{c_4} t_{c_5} \cdot t_{c_4} t_d t_{c_3} t_{c_4} \cdot N_7 \cdot t_{c_4} t_{t_{c_4}^{-1}(e)} \cdot t_{c_5} t_{c_3} t_{c_4} t_{c_2} t_{c_3} \\
&= t_{c_4} t_{c_3} t_{c_2} t_{c_1} \cdot t_{c_1} t_{c_2} t_{c_3} t_{c_4} \cdot t_{c_4} t_d t_{c_3} t_{c_4} \cdot M_9 \cdot t_{c_5} t_{c_3} t_{c_4} t_{c_2} t_{c_3} \\
&= \phi_{12} \cdot M_9 \cdot t_{c_5} t_{c_3} t_{c_4} t_{c_2} t_{c_3}. 
\end{align*}
The Lemma follows from the relation (\ref{DKPrelation0}). 
\end{proof}}

\begin{proof}[Proof of Theorem~\ref{thm1}]
Suppose that $g\geq 4$. 
Let $S$ and $S^\prime$ be two spheres with $g-1$ boundary components, and we denoted by $\delta,\delta_1,\delta_2,\ldots,\delta_{g-2}$ and $\delta^\prime,\delta_{g-1},\delta_g,\ldots,\delta_{2g-4}$ the boundary curves of $S$ and $S^\prime$, respectively. 
We attach $S$ and $S^\prime$ to $\Sigma_g^2$ along $\delta$ and $\delta^\prime$. 
Then, we obtain a compact oriented surface of genus $g$ with $2g-4$ boundary components $\delta_1,\delta_2,\ldots,\delta_{2g-4}$, denoted by $\Sigma_g^{2g-4}$. 
By Proposition~\ref{prop1} and Lemma~\ref{sections1}, there are simple close curves $x_1,x_2,\ldots,x_{2g-4}$ such that the following relations hold in $\Gamma_g^{2g-4}$:
Let 
\begin{align*}
&Z_{g-2}:=t_{x_1} t_{x_2} \cdots t_{x_{g-2}}& &\mathrm{and}& &W_{g-2}:=t_{x_{g-1}} t_{x_g} \cdots t_{x_{2g-4}}.&
\end{align*}
{If $g$ is even, then we have 
\begin{align*}
t_{\delta_1} t_{\delta_2} \cdots t_{\delta_{2g-4}} = K_4 \left(H_5\right)^4 I_{2g-8} J_{2g-6} L_{16} \left(H_3\right)^2 Z_{g-2} W_{g-2}.
\end{align*}
If $g$ is odd, then we have 
\begin{align*}
t_{\delta_1} t_{\delta_2} \cdots t_{\delta_{2g-4}} =  K_4 \left(H_5\right)^4 I_{2g-8} J_{2g-6} L_{16} \left(\overline{H}_3\right)^2 Z_{g-2} W_{g-2}.
\end{align*}
By conjugation by $L_{16}$ and Lemma~\ref{long1}, we have the following relation: 
If $g$ is even, then we have 
\begin{align*}
&t_{\delta_1} t_{\delta_2} \cdots t_{\delta_{2g-4}} = K_4^\prime \phi_{12,m} T_{10m} M_9 \cdot t_{c_5} t_{c_3} t_{c_4} t_{c_2} t_{c_3} \cdot I_{2g-8} J_{2g-6} \left(H_3^\prime\right)^2 Z_{g-2}^\prime W_{g-2}^\prime, 
\end{align*}
where $K^\prime_4 = L_{16} K_4 L_{16}^{-1}$, $H_3^\prime = L_{16} H_3 L_{16}^{-1}$ $Z^\prime_{g-2}= L_{16} Z_{g-2} L_{16}^{-1}$ and $W^\prime_{g-2} = L_{16} W_{g-2} L_{16}^{-1}$. 
If $g$ is odd, then we have 
\begin{align*}
&t_{\delta_1} t_{\delta_2} \cdots t_{\delta_{2g-4}} = K_4^\prime \phi_{12,m} T_{10m} M_9 \cdot t_{c_5} t_{c_3} t_{c_4} t_{c_2} t_{c_3} \cdot I_{2g-8} J_{2g-6} \left(\overline{H^\prime}_3\right)^2 Z_{g-2}^\prime W_{g-2}^\prime, 
\end{align*}
where $\overline{H^\prime}_3 = L_{16} \overline{H}_3 L_{16}^{-1}$. }

{
Note that $K^\prime_4$, $H^\prime_3$, $\overline{H^\prime}_3$ , $Z^\prime_{g-2}$ and $W^\prime_{g-2}$ are also products of $4$, $3$, $3$, $g-2$ and $g-2$ right-handed Dehn twists about nonseparating curves, respectively. 
Therefore, for any positive integer $m$, $t_{\delta_1} t_{\delta_2} \cdots t_{\delta_{2g-4}}$ may can be written as a product of $6g+2+10m$ right-handed Dehn twists about nonseparating curves. This completes the proof. }
\end{proof}

\subsection{Factorizations of boundary multitwist and a single Dehn twist} 

\begin{thm}\label{thm2}
Let $g\geq 2$. 
Let $a$ be a nonseparating curve on $\Sigma_g^n$. 
Then, for any positive integer $n$, in the mapping class group $\Gamma_g^n$, the multitwist 
\begin{align*}
t_{\delta_1} t_{\delta_2} \cdots t_{\delta_n} t_a 
\end{align*}
can be written as a product of arbitrarily large number of right-handed Dehn twists about nonseparating curves. 
\end{thm}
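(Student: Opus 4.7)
The strategy is to combine the DKP commutator machinery with the daisy-relation inflation technique encoded in Lemma~\ref{sections1}, upgrading Theorem~\ref{thm1}'s $n=2g-4$ result to arbitrary $(g,n)$ with $g\geq 2$, $n\geq 1$, while absorbing the extra $t_a$ factor throughout.

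For $g\geq 3$, I would begin with Theorem~\ref{thm1}'s arbitrarily-long positive factorization of $t_{\delta_1}\cdots t_{\delta_{2g-4}}$ in $\Gamma_g^{2g-4}$ and post-multiply by $t_a$, where $a$ is a distinguished nonseparating curve (say, inherited from $c_4$ in the $\Sigma_2^2$ subsurface built into Theorem~\ref{thm1}'s construction). This directly handles the pair $(g,2g-4)$ for this specific $a$. To reach other $n\geq 1$, I would cap off boundary components to decrease $n$ (nonseparating curves not cobounding with the capped boundary remain nonseparating, preserving positivity), and attach sphere-with-holes pieces to one boundary combined with Lemma~\ref{sections1}-style inflation to increase $n$.

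For $g=2$, where Theorem~\ref{thm1} does not apply, I would instead start from the DKP relation \eqref{DKPrelation1}, namely $t_\delta t_{\delta'} t_{c_4}=D_9\cdot \phi_{12,m}\cdot T_{10m}$ in $\Gamma_2^2$, which gives the case $(g,n,a)=(2,2,c_4)$ with factorization length $21+10m$. The case $n=1$ follows by capping off $\delta'$, and $n\geq 3$ follows by attaching sphere-with-holes pieces to $\delta'$ and invoking Lemma~\ref{sections1} to convert the now-interior $t_{\delta'}$ into boundary multitwists times positive products of nonseparating Dehn twists. Finally, to replace the distinguished $c_4$ by an arbitrary nonseparating $a\subset\Sigma_g^n$, I apply change of coordinates: any two nonseparating simple closed curves in $\Sigma_g^n$ lie in the same $\Gamma_g^n$-orbit, since their complements are both diffeomorphic to $\Sigma_{g-1}^{n+2}$. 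Conjugating the factorization by $\psi\in\Gamma_g^n$ with $\psi(c_4)=a$, together with the centrality of $t_{\delta_1}\cdots t_{\delta_n}$ in $\Gamma_g^n$, converts it into one for $t_{\delta_1}\cdots t_{\delta_n}t_a$ of the same length along nonseparating curves.

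The principal technical obstacle will be calibrating the parameter $l$ in each application of Lemma~\ref{sections1}, together with the topological type of the attached subsurfaces, so that the final surface is exactly $\Sigma_g^n$ and the right-hand side of the base relation supplies the required $t_\alpha^{l-1}t_{\alpha'}$ substructure. Splitting one boundary into several via daisy needs $p-1\geq 1$ copies of the central twist in the source, so one must either pre-multiply with chain-relation words (in the style of Proposition~\ref{prop1}) to supply such powers, or iterate Lemma~\ref{sections1} with modest $l$ until the target $n$ is attained. The $t_a$ factor, living in the original $\Sigma_2^2$ piece disjoint from the attached pieces, rides along unchanged through all these manipulations and remains a Dehn twist about a nonseparating curve in the final factorization.
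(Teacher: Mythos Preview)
Your $g=2$ plan is essentially the paper's own argument: start from the DKP relation~\eqref{DKPrelation1} in $\Gamma_2^2$, cap off for $n=1$, and inflate via Lemma~\ref{sections1} for larger $n$; the final change-of-coordinates step to replace $c_4$ by an arbitrary nonseparating $a$ is implicit in the paper but you spell it out correctly.

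For $g\geq 3$ the paper takes a different and more direct route than yours. It does \emph{not} invoke Theorem~\ref{thm1} at all. Instead it stays in $\Sigma_g^2$, uses the long chain relation to write $t_\delta t_{\delta'}=t_a t_b\, t_{c_4}\cdot P$ (with $t_a t_b=(t_{c_1}\cdots t_{c_5})^6$ the boundary multitwist of the embedded genus-$2$ piece), substitutes the DKP word for $t_a t_b\, t_{c_4}$, then applies the single key identity
\[
T_{10m}=O_{10m-12n+22}\cdot t_{c_1}^2\, t_d^{\,n-2} t_e^{\,n-2}
\]
(obtained from $(t_{c_1}t_{c_2}t_{c_3})^4=t_d t_e$) to pull out exactly the powers needed, multiplies both sides by $t_{d'}$, and applies Lemma~\ref{sections1} \emph{once} with $l=n-1$ to land directly in $\Gamma_g^n$. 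This yields $t_{\delta_1}\cdots t_{\delta_n} t_{d'}$ as a product of $4g^2+6g+13+10m-10n$ nonseparating twists for every $n\geq 2$ in one shot. The ``technical obstacle'' you flag is precisely this extraction step, and resolving it is the heart of the argument.

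There is a real concern with your plan to increase $n$ past $2g-4$ by reapplying Lemma~\ref{sections1} to the \emph{output} of Theorem~\ref{thm1}. The powers of $t_d,t_e$ you can mine from $T_{10m}$ live in the genus-$1$ end of the surface, not adjacent to the boundary components $\delta_1,\ldots,\delta_{2g-4}$ produced by Theorem~\ref{thm1}'s daisy substitutions; so they are not in position to serve as the $\alpha,\alpha'$ of Lemma~\ref{sections1} for splitting one of those $\delta_i$. To make your strategy work you would in effect have to back up to the $\Sigma_g^2$ stage, extract the larger power $t_d^{\,l-1}$ there, and inflate once with the correct $l$ --- which is exactly the paper's argument. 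Your capping-off direction (decreasing $n$) is fine, but needs the routine check that none of the daisy curves $x_i$ becomes separating after capping; in the present setup each $x_i$ degenerates to a copy of $d$ or $e$, which remain nonseparating.
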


Let $O_s, P_s$ $Q_s$ and $R_s$ be products of $s$ right-handed Dehn twists about nonseparating curves. 
Note that for any $\Phi$ in $\Gamma_g^n$, $\Phi O_s \Phi^{-1}, \Phi P_s \Phi^{-1}$, $\Phi Q_s \Phi^{-1}$ and $\Phi R_s \Phi^{-1}$ are also products of $s$ right-handed Dehn twists about nonseparating curves.

\begin{proof}[Proof of Theorem~\ref{thm2}]
Suppose that $n\geq 2$. 
Let $k$ be a positive integer. 
Note that, if $2m>4(n-2)+2$, then by the chain relation $t_d t_e = (t_{c_1} t_{c_2} t_{c_3})^4$, we can rewrite $T_{10m}= \{(t_{c_1} t_{c_2} t_{c_3})^2 t_{c_2} t_{c_1} t_{c_3} t_{c_2}\}^m$ in $\Gamma_2^2$ as follows:
\begin{align}
T_{10m} &= H_{10m-12n+24} \cdot t_{c_1}^2 (t_{c_1} t_{c_2} t_{c_3})^{4(n-2)} \notag \\
&= O_{10m-12n+22} \cdot t_{c_1}^2 t_d^{n-2} t_e^{n-2}. \label{H}
\end{align}

Let $S$ be a sphere with $n$ boundary components $\delta,\delta_1,\delta_2,\ldots,\delta_{n-1}$. We attach $S$ to $\Sigma_g^2$ along $\delta$. Set $\delta^\prime=\delta_n$. Then, we obtain a compact oriented surface of genus $g$ with $n$ boundary components $\delta_1,\delta_2,\ldots,\delta_n$, denoted by $\Sigma_g^n$. 

Suppose that $g=2$. Let $D_8 = D_9 t_{c_5}^{-1}$. By the relations (\ref{DKPrelation1}) and (\ref{H}), we have 
\begin{align*}
t_{\delta} t_{\delta_n} t_{c_4} &= D_8 \cdot t_{c_5} \cdot \phi_{12,m} \cdot O_{10m-12n+22} \cdot t_{c_1}^2 t_d^{n-2} t_e^{n-2} \\
&= D_8 \cdot \phi^\prime_{12,m} \cdot O_{10m-12n+22}^\prime \cdot t_{c_1}^2 t_e^{n-2} t_d^{n-2} t_{c_5}, 
\end{align*}
where $\phi^\prime_{12,m} = t_{c_5} \phi_{12,m} t_{c_5}^{-1}$, and $O_{10m-12n+22}^\prime = t_{c_5} O_{10m-12n+22} t_{c_5}^{-1}$. By Lemma~\ref{sections1}, there are nonseparating curves $x_1,\ldots,x_{n-1}$ on $\Sigma_2^n$ such that the following relation holds in $\Gamma_2^n$:
\begin{align}
t_{\delta_1} \cdots t_{\delta_{n-1}} t_{\delta_n} t_{c_4} = D_8 \cdot \phi^\prime_{12,m} \cdot O_{10m-12n+22}^\prime \cdot t_{c_1}^2 t_e^{n-2} \cdot x_1 x_2 \cdots x_{n-1}. \label{g2}
\end{align}
Therefore, if $g=2$, then the element $t_{\delta_1} \cdots t_{\delta_{n-1}} t_{\delta_n} t_{c_4}$ can be written as a product of $10m-10n+41$ right-handed Dehn twists about nonseparating simple closed curves for any $m>2(n-2)+2$. 

Suppose that $g\geq 3$. 
Let $a$ and $b$ be the simple closed curves on $\Sigma_g^2$ as in Figure~\ref{curves1}. 
By the chain relations $t_{\delta} t_{\delta_n} = (t_{c_1} t_{c_2} \cdots t_{c_{2g+1}})^{2g+2} = (t_{c_1} t_{c_2} \cdots t_{c_5})^6 t_{c_4} \cdot P_{4g^2+6g-29}$ and ${t_a t_b} = (t_{c_1} t_{c_2} \cdots t_{c_5})^6$, we obtain the following relation: 
\begin{align*}
t_{\delta} t_{\delta_n} = {t_a t_b} t_{c_4} \cdot P_{4g^2+6g-29}. 
\end{align*}
Therefore, by the relation (\ref{DKPrelation1}) and (\ref{H}), we have 
\begin{align}
t_{\delta} t_{\delta_n} &= D_9 \cdot \phi_{12,m} \cdot O_{10m-12n+22} \cdot t_{c_1}^2 t_d^{n-2} t_e^{n-2} \cdot P_{4g^2+6g-29}, \notag \\
&= D_9 \cdot \phi_{12,m} \cdot O_{10m-12n+22} \cdot t_{c_1}^2 t_e^{n-2} \cdot P_{4g^2+6g-29}^\prime \cdot t_d^{n-2}, \label{relation3}
\end{align}
where $P_{4g^2+6g-29}^\prime = t_d^{n-2} P_{4g^2+6g-29} t_d^{-n+2}$. Let us consider the simple closed curve $d^\prime$ on $\Sigma_g^2$ as in Figure~\ref{curves1}. When we multiply both sides of (\ref{relation3}) by $t_{d^\prime}$, we obtain the following relation:
\begin{align*}
t_{\delta} t_{\delta_n} t_{d^\prime} = D_9 \cdot \phi_{12,m} \cdot O_{10m-12n+22} \cdot t_{c_1}^2 t_e^{n-2} \cdot P_{4g^2+6g-29}^\prime \cdot t_d^{n-2} t_{d^\prime}. 
\end{align*}
By Lemma~\ref{sections1}, there are nonseparating curves $x_1,\ldots,x_{n-1}$ on $\Sigma_g^n$ such that the following relation holds in $\Gamma_g^n$:
\begin{align}
t_{\delta_1} \cdots t_{\delta_n} t_{d^\prime} = D_9 \cdot \phi_{12,m} \cdot O_{10m-12n+22} \cdot t_{c_1}^2 t_e^{n-2} \cdot P_{4g^2+6g-29}^\prime \cdot x_1 x_2 \cdots x_{n-1}. \label{relation4}
\end{align}
Therefore, if $g\geq 3$, then the element $t_{\delta_1} \cdots t_{\delta_{n-1}} t_{\delta_n} t_{d^\prime}$ can be written as a product of $4g^2+6g+13+10m-10n$ right-handed Dehn twists about nonseparating simple closed curves for any $m>2(n-2)+2$. 
\end{proof}

\subsection{Powers of boundary multitwists have infinite length} 

\begin{thm}\label{thm3}
Let $g\geq 2$, and let $k \geq 2$ be a positive integer. 
Then, for any $k$ and $n$, in the mapping class group $\Gamma_g^n$, the element 
\begin{align*}
(t_{\delta_1} t_{\delta_2} \cdots t_{\delta_n})^k
\end{align*}
can be written as a product of arbitrarily large number of right-handed Dehn twists about nonseparating curves. 
\end{thm}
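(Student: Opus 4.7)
The plan is to combine Theorem~\ref{thm2} with the centrality of the boundary multitwist $\Delta$ and a $k$-th power analogue of Lemma~\ref{sections1}.

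First, by Theorem~\ref{thm2}, for each $g\geq 2$ and $n\geq 1$, there is a distinguished nonseparating curve $a\subset \Sigma_g^n$ (namely $a=c_4$ for $g=2$ and $a=d'$ for $g\geq 3$) and a sequence of positive factorizations $W_m$ of $\Delta\cdot t_a$ along nonseparating curves with $|W_m|\to\infty$. Since $\Delta$ lies in the center of $\Gamma_g^n$, $(\Delta t_a)^k = \Delta^k\cdot t_a^k$, so $W_m^k$ is a positive factorization of $\Delta^k\cdot t_a^k$ of length $k|W_m|$, growing unboundedly with $m$. The remaining task is to convert this into a positive factorization of $\Delta^k$ itself.

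The key technical step is to establish a power version of Lemma~\ref{sections1}: if a relation $U\cdot t_\beta^k = T\cdot t_\alpha^{k(l-1)}\cdot t_{\alpha'}^k$ holds in $\Gamma_g^n$, then $U\cdot t_{\delta_1}^k\cdots t_{\delta_l}^k = T\cdot (t_{x_1}\cdots t_{x_l})^k$ holds as well. The argument follows the pattern of Lemma~\ref{sections1}, based on the $k$-th power of the daisy relation
\[
t_\alpha^{k(l-1)}\cdot t_{\delta_1}^k\cdots t_{\delta_l}^k\cdot t_{\alpha'}^k \;=\; (t_{x_1}\cdots t_{x_l})^k\cdot t_\beta^k,
\]
which in turn follows from the ordinary daisy $t_\alpha^{l-1}t_{\delta_1}\cdots t_{\delta_l}t_{\alpha'}=t_{x_1}\cdots t_{x_l}t_\beta$ by invoking the centrality of the $t_{\delta_i}$, the disjointness of $\alpha$ and $\alpha'$ (so that $t_\alpha$ commutes with $t_{\alpha'}$), and the disjointness of $\beta$ from each $x_i$ (so that $t_\beta$ commutes with every $t_{x_i}$). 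Multiplying $U\cdot t_\beta^k = T\cdot t_\alpha^{k(l-1)}\cdot t_{\alpha'}^k$ by $t_{\delta_1}^k\cdots t_{\delta_l}^k$, substituting the power daisy, and cancelling $t_\beta^k$ yields the claim.

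With this tool in hand, I would mimic the template of Theorem~\ref{thm1}: start from the positive factorization of $t_\delta t_{\delta'}$ in $\Gamma_g^2$ produced by Proposition~\ref{prop1} combined with the arbitrarily long substitution of Lemma~\ref{long1}, raise it to the $k$-th power using centrality of $t_\delta t_{\delta'}$, attach two spheres with holes to $\delta$ and $\delta'$ to realize $\Sigma_g^n$, and apply the power Lemma~\ref{sections1} twice (once for $t_\delta^k$, once for $t_{\delta'}^k$) to convert the result into a positive factorization of $\Delta^k = t_{\delta_1}^k\cdots t_{\delta_n}^k$ in $\Gamma_g^n$ along nonseparating curves, with length growing linearly in $m$. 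For the low-genus cases $g=2,3$ where Proposition~\ref{prop1}'s starting relation is unavailable or degenerate, one instead substitutes Theorem~\ref{thm2}'s relations (equation~\ref{g2} or~\ref{relation4}), raises to the $k$-th power by centrality of $\Delta$, and uses the same power substitution machinery to absorb the extraneous $t_{c_4}^k$ or $t_{d'}^k$.

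The main obstacle will be the bookkeeping surrounding the power-Lemma application: one must verify that in the specific embeddings used for the sphere attachments, the curves $\alpha, \alpha'$ remain disjoint and the $x_i$ remain nonseparating in the ambient $\Sigma_g^n$, and that the commutation hypothesis of $\beta$ with each $t_{x_i}$ continues to hold. Once these geometric checks are performed, length counting confirms that the constructed factorizations of $\Delta^k$ are positive, along nonseparating curves, and of unbounded length.
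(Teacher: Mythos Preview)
Your power version of Lemma~\ref{sections1} is correct, and the route through Proposition~\ref{prop1} can be made to work for $g\geq 4$ and $n\leq 2g-4$: since $V_0 A=t_\delta t_{\delta'}$ is central (with $A=t_d^{g-3}t_{d'}t_e^{g-3}t_{e'}$), one gets $A V_0=V_0 A$ and hence $(t_\delta t_{\delta'})^k=V_0^k A^k=V_0^k\,t_d^{k(g-3)}t_{d'}^k t_e^{k(g-3)}t_{e'}^k$, to which your power daisy applies. That is a genuinely different argument from the paper's and is a nice observation.

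The gap is in the remaining cases, where you claim the power substitution machinery will ``absorb the extraneous $t_{c_4}^k$ or $t_{d'}^k$''. Lemma~\ref{sections1} and its $k$-th power trade a separating-type twist $t_\beta^k$ on one side of a relation for $t_\alpha^{k(l-1)}t_{\alpha'}^k$ on the other; they never make a Dehn twist vanish. In relations~(\ref{g2}) and~(\ref{relation4}) the curve $c_4$ (resp.\ $d'$) is not part of any daisy configuration with the boundary components, so there is no substitution that eliminates $t_{c_4}^k$ or $t_{d'}^k$. Thus for $g=2,3$, and for $g\geq 4$ with $n>2g-4$, your proposal gives no way to pass from a factorization of $\Delta^k t_a^k$ to one of $\Delta^k$.

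The paper's proof supplies exactly this missing mechanism. The factorization~(\ref{g2}) has the special feature that an explicit $t_{c_1}^2$ sits inside the right-hand side, with $c_1$ and $c_4$ disjoint nonseparating curves; choosing $\Psi\in\Gamma_g^n$ with $\Psi(c_4)=c_1$, $\Psi(c_1)=c_4$ and conjugating yields a second relation $\Delta\,t_{c_1}=Q'\,t_{c_4}^2\,R'$. Writing $\Delta^k t_{c_4}^{k-1}t_{c_1}=(\Delta t_{c_4})^{k-1}(\Delta t_{c_1})$, for $k=2,3$ the right side now contains just enough explicit $t_{c_1}$'s and $t_{c_4}$'s to cancel $t_{c_4}^{k-1}t_{c_1}$ on the left (using centrality of $\Delta^k$ to cycle factors and conjugation to bring the cancelling twists into position); $k\geq 4$ then follows from $k=2q+3\epsilon$. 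This swap-and-cancel trick, not a daisy substitution, is what actually removes the extra twist.
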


\begin{proof}[Proof of Theorem~\ref{thm3}] 
Suppose that $k\in\{2,3\}$ and $n\geq 2$. 
Let $S$ be a sphere with $n$ boundary components $\delta,\delta_1,\delta_2,\ldots,\delta_{n-1}$. 
We attach $S$ to $\Sigma_g^2$ along $\delta$. 
Set $\delta^\prime=\delta_n$. 
Then, we obtain a compact oriented surface of genus $g$ with $n$ boundary components $\delta_1,\delta_2,\ldots,\delta_n$, denoted by $\Sigma_g^n$. 

Suppose that $g=2$. For simplicity, we write the relation (\ref{g2}) as follows:
\begin{align*}
t_{\delta_1} \cdots t_{\delta_{n-1}} t_{\delta_n} t_{c_4} = Q_{10m-12n+42} \cdot t_{c_1}^2 R_{2n-3}. 
\end{align*}
Since $c_1$ and $c_4$ are nonseparating curves and disjoint from each other, there is an element $\Psi_1$ in $\Gamma_2^n$ such that $\Psi_1(c_4)=c_1$ and $\Psi_1(c_1)=c_4$. Therefore, by the relation $t_{\Psi_1(c)}=\Psi_1 t_c \Psi_1^{-1}$, we obtain the following relation:
\begin{align*}
t_{\delta_1} \cdots t_{\delta_{n-1}} t_{\delta_n} t_{c_1} = Q_{10m-12n+42}^\prime \cdot t_{c_4}^2 R_{2n-3}^\prime, 
\end{align*}
where $Q_{10m-12n+42}^\prime = \Psi_1Q_{10m-12n+42}\Psi_1^{-1}$ and $R_{2n-3}^\prime = \Psi_1R_{2n-3}\Psi_1^{-1}$. From the above relations, we have 
\begin{align*}
(t_{\delta_1} \cdots t_{\delta_{n-1}} t_{\delta_n})^k t_{c_4}^{k-1} t_{c_1} &= (t_{\delta_1} \cdots t_{\delta_{n-1}} t_{\delta_n} t_{c_4})^{k-1} (t_{\delta_1} \cdots t_{\delta_{n-1}} t_{\delta_n} t_{c_1}) \\
&= (Q_{10m-12n+42} \cdot t_{c_1}^2 R_{2n-3})^{k-1} (Q_{10m-12n+42}^\prime \cdot t_{c_4}^2 R_{2n-3}^\prime). 
\end{align*}
Since $k=2,3$, we can remove $t_{c_4}^{k-1} t_{c_1}$ from both sides of this relation. Hence, $(t_{\delta_1} \cdots t_{\delta_{n-1}} t_{\delta_n})^k$ can be written as a product of $2(10m-10n+39)$ right-handed Dehn twists about nonseparating curves. The proof for $g\geq 3$ is similar. In this case, we use the relation (\ref{relation4}) and an element $\Psi_2$ in $\Gamma_g^n$ such that $\Psi_2(d^\prime)=c_1$ and $\Psi_2(c_1)=d^\prime$.

Suppose that $k\geq 4$. Since $k=2q+3\epsilon$ for $q\geq 1$ and $\epsilon=0,1$, $(t_{\delta_1} \cdots t_{\delta_n})^k = \{(t_{\delta_1} \cdots t_{\delta_n})^2\}^q (t_{\delta_1} \cdots t_{\delta_n})^{3\epsilon}$ in $\Gamma_g^n$ $(g\geq 2)$ can be written as a product of arbitrarily large number of right-handed Dehn twists about nonseparating curves. 
 
\end{proof}

\begin{remark}
A close look at the proof of Theorem~\ref{Kodaira} makes it evident that whenever we have arbitrarily long positive factorizations of \textit{any} multitwist along boundary curves \ $t_{\delta_1}^{k_1} t_{\delta_2}^{k_2} \cdots t_{\delta_n}^{k_n}$\,, all but finitely many of the corresponding Lefschetz fibrations will be on symplectic $4$-manifolds of general type. In particular, the total spaces of the positive factorizations of the multitwists \ $(t_{\delta_1} t_{\delta_2} \cdots t_{\delta_n})^k$\,, $k \geq 2$, in the above proof should have symplectic Kodaira dimension $\kappa =2$, no matter what $n$ is, which is very different than the case of $k=1$ corresponding to Lefschetz pencils.
\end{remark}

\vspace{0.1in}
\section{Completing the proofs of main theorems and further remarks} \label{Sec:final}

We now bring together various results we have obtained to complete the proofs of Theorems~A, B, C. We will also discuss the length function for further mapping classes, as well as, for its 
restrictions to subgroups of mapping classes, and list a few interesting questions. 

\subsection{Proofs of Theorems~A, B, C} \

To prove our main theorems, we will simply provide navigational guides to the relevant results one needs to assemble, many of which we have obtained in the previous sections.

\begin{proof}[Proof of Theorem~A]
It is well-known that there is a unique genus-$1$ Lefschetz \linebreak \textit{fibration} with $(-1)$-sphere sections, whose total space is $X=E(1)= \CP \# 9 \CPb$. Since $b^-(X)=9$, there are no more than $9$ disjoint $(-1)$-sphere sections in this fibration. It follows that $\h(\Delta)=- \infty$ if $n>9$ and $12$ if $1 \leq n \leq 9$.

All the remaining values of $\h(\Delta)$ are given by Theorem~\ref{Kodaira} and by Theorem~\ref{thm1}.
\end{proof}

\begin{proof}[Proof of Theorem~B]
The mapping class group $\Gamma_{1}^1$ is isomorphic to the braid group on $3$ strands, and it is generated by $t_a,t_b$ for any two nonseparating simple closed curves intersecting at one point. Here $H_1(\Gamma_1^1; \Z) \cong \Z$, generated by any Dehn twist along a nonseparating curve. By the $1$-boundary chain relation, we have  $t_\delta=(t_at_b)^6$ in $\Gamma_g^1$. So for $[t_a]=1$ in $H_1(\Gamma_1^1; \Z)$, we have $[t_\delta]=12$ in $H_1(\Gamma_1^1; \Z)$. 

If $n > 1$, we can cap off all boundary components of $\Sigma_1^n$ but one, which induces a homomorphism from $\Gamma_g^n$ onto $\Gamma_g^1$. Thus, any positive product of $\Delta^k$ in $\Gamma_1^n$, \textit{if exists}, yields a positive product of $t_{\delta}^k$ in $\Gamma_1^1$, which, by the above calculation is equal to $12k$. It follows that $\Delta^k$ has a positive factorization of length $12k + l$ in $\Gamma_g^n$, where $l$ is the number of Dehn twists along curves that separate some of the $n-1$ boundary components we capped off. Since $\h$ is calculated only for nonseparating curves, the latter contribution does not occur, completing the proof of our claim that $\Delta^k$ is precisely $12k$, part~(1) of the theorem. It is a standard fact that any elliptic surface $E(k)$ admits $n \leq 9$ sections of self-intersection $-k$, so $\Delta^k$ admits a positive factorization provided $n \leq 9$.

Part~(2) is covered by Theorem~\ref{thm3} and part~(3) by Theorem~\ref{thm2}. 
\end{proof}

\begin{proof}[Proof of Theorem~C]
In both parts, the value $-\infty$ of $\hT$ or $\h$ is realized by $1 \in \Gamma_g^n$ by Proposition~\ref{nonpositive}, whereas any positive $k$ is realized by $t_c^k$ along any homologically essential curve $c$ by Proposition~\ref{unique}. Note that for $g=0$ and $n=1$, there are no homologically essential curves, and thus no positive factorizations to consider. 

The fact that $\hT(\Gamma_g^n)$ does not contain $+\infty$ under the assumptions in part~(1) follows from Proposition~\ref{smallgenera}. On the contrary, for $g \geq 2$, either by Theorem~\ref{thm2} or Theorem~\ref{thm3}, we have mapping classes in $\Gamma_g^n$ with infinite length, completing the proof of part~(2) of the theorem. 
\end{proof}

\subsection{Further observations and questions} \

As our results demonstrate, knowing that a mapping class admits a positive factorization in the mapping class group of a 
surface (say the page of an open book) does not in general mean that there is an upper bound on the length of all its positive factorizations. The exceptions occur in low genera cases which is essentially due to positive factorizations being lifts of quasipositive braid factorizations, where for the latter, it is known that the degree of a factorization determines the length of all possible factorizations. We can thus ask for which subgroups $N < \Gamma_g^n$, the restriction of $\h$ to $N$, which we denote by $\h _N$, has bounded image. 

Consider the subgroup $\mathcal{H}_g^1$ of $\Gamma_g^1$\,, which consists of mapping classes that commute with a fixed hyperelliptic involution on $\Sigma_g^1$. This group has a non-trivial abelianization, namely $\Z$, which in a similar fashion to our arguments above provides a bound on the length of any factorization into hyperelliptic Dehn twists in $\mathcal{H}_g^1$: the length of any factorization into hyperelliptic Dehn twists along nonseparating curves is fixed. The quotient of $\Sigma_g^1$ under the hyperelliptic involution gives the disk with $2g+1$ marked points. Since any $\Phi \in \mathcal{H}_g^1$ commutes with the hyperelliptic involution, it gives a mapping class of the disk with $2g+1$ marked points. Projecting the branch locus to the quotient then gives a braid in $S^3$, and the class $[f]$ in the abelianization is exactly the writhe of the this braid under the obvious identification of $Ab(\tilde{\Gamma}_g^1)$ with $\Z$. 

Now for $g \geq 3$, let $\Phi= t_{\delta}$ in $\mathcal{H}_g^1$. By the above observation, $\Phi$ has finite length in this subgroup. On the other hand, we have the $1$-boundary component chain relation:
\[ t_{\delta} = (t_{c_1}t_{c_2}\cdots t_{c_{2g}})^{4g+2} \, . \]
It is easy to see that by applying braid relators successively, we get
\[ \Phi= (t_{c_1}t_{c_2}\cdots t_{c_{2g-1}})^{4g+2} \cdot W \, , \]
where $W$ is a positive word that consists of products of conjugates of $t_{c_{2g}}$. By the $2$-boundary chain relation, $(t_{c_1}t_{c_2}\cdots t_{c_{2g}})^{2g+2} = t_{b_1} t_{b_2}$, which is a mapping class with infinite length. It follows that $\Phi$ has infinite length in $\Gamma_g^1$, even though it has finite length in the subgroup $\mathcal{H}_g^1$. We have thus seen:

\begin{proposition}
The image of the positive factorization length function on the subgroup $N=\h_{\mathcal{H}_g^1}$ is strictly smaller  than its image on the \mbox{mapping class group $\Gamma_g^1$.} Namely, $\h_{\mathcal{H}_g^1}(\mathcal{H}_g^1)= \N \cup \{-\infty\}$, whereas $\h(\Gamma_g^1)=\N \cup \{\pm\infty\}$.
\end{proposition}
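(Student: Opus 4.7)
My plan is to combine three ingredients that the paper has already assembled. The equality $\h(\Gamma_g^1) = \N \cup \{\pm\infty\}$ is Theorem~C(2) applied with $n=1$, so the work reduces to computing the restricted image $\h_{\mathcal{H}_g^1}(\mathcal{H}_g^1)$ and verifying that $+\infty$ is omitted. I would organize the verification into the realization of every nonnegative integer, the realization of $-\infty$, and the non-realization of $+\infty$, after which the strict containment is automatic.

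For the realization of every $k \in \N$, fix any nonseparating simple closed curve $c \subset \Sigma_g^1$ preserved setwise by the hyperelliptic involution $\iota$ (any of the standard chain curves $c_i$ will do), so that $t_c \in \mathcal{H}_g^1$. Since $\Sigma_g^1 \setminus c$ is connected and contains the boundary component, $c$ is nonisolating, and Proposition~\ref{unique} gives a unique positive factorization of $t_c^k$ in $\Gamma_g^1$ of length $k$. Uniqueness in the ambient group forces uniqueness inside the subgroup, so $\h_{\mathcal{H}_g^1}(t_c^k) = k$. For the value $-\infty$, observe that $t_\delta^{-1} \in \mathcal{H}_g^1$ admits no positive factorization at all by Proposition~\ref{nonpositive}, and hence none inside the subgroup either, so $\h_{\mathcal{H}_g^1}(t_\delta^{-1}) = -\infty$.

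To rule out $+\infty$, I would use the writhe-of-associated-braid abelianization $\mathcal{H}_g^1 \to \Z$ recalled immediately above the statement. If $\Phi \in \mathcal{H}_g^1$ admits a positive factorization $\Phi = t_{c_l} \cdots t_{c_1}$ inside $\mathcal{H}_g^1$, then $t_{c_i} \in \mathcal{H}_g^1$ forces $t_{\iota(c_i)} = \iota t_{c_i} \iota^{-1} = t_{c_i}$, hence $\iota(c_i) = c_i$, so each $c_i$ is a hyperelliptic nonseparating curve. The abelianization map sends every such twist to $+1$, so comparing images yields $l = [\Phi]_{\mathrm{ab}} \in \Z$, a fixed finite integer; thus $\h_{\mathcal{H}_g^1}(\Phi) \leq [\Phi]_{\mathrm{ab}} < +\infty$. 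The strict inclusion is then witnessed by $\Phi = t_\delta$ from the paragraph preceding the statement: the $1$-boundary chain relation together with braid rearrangements splits off $(t_{c_1} \cdots t_{c_{2g}})^{2g+2} = t_{b_1}t_{b_2}$, and Theorem~\ref{thm1} supplies arbitrarily long positive factorizations of this $2$-boundary multitwist, which lift to arbitrarily long positive factorizations of $\Phi$ in $\Gamma_g^1$. The only delicate step is the implication $t_c \in \mathcal{H}_g^1 \Rightarrow \iota(c) = c$; once it is in hand, the abelianization bound is automatic, and the strict containment reflects the structural fact that asymmetric twists — which the subgroup cannot produce as individual factors — are precisely what drive the length to infinity in the ambient group.
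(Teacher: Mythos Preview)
Your proof is correct and follows the same core approach as the paper: both rely on the abelianization $\mathcal{H}_g^1 \to \Z$ (via the braid-group/writhe description) to show that every positive factorization into nonseparating hyperelliptic twists has length pinned down by the image of $\Phi$, hence $+\infty$ is never attained in the subgroup; you are somewhat more systematic than the paper in explicitly realizing each value in $\N \cup \{-\infty\}$ via Propositions~\ref{unique} and~\ref{nonpositive}.

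One small remark: your closing paragraph on the witness $t_\delta$ is redundant once you have invoked Theorem~C(2) for $\h(\Gamma_g^1)=\N\cup\{\pm\infty\}$, and it inherits the paper's typographical slip in the two--boundary chain relation (the odd--length chain $c_1,\dots,c_{2g-1}$ with exponent $2g$ is what actually yields $t_{b_1}t_{b_2}$); moreover, Theorem~\ref{thm1} as stated concerns $\Gamma_{g'}^{\,2g'-4}$, so to get a genuine two--boundary multitwist one should either take the subsurface genus $g'=3$ or, more directly, just cite Theorem~A for $t_\delta$ in $\Gamma_g^1$ with $n=1\le 2g-4$ when $g\ge 3$. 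None of this affects the validity of your argument.
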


We therefore see that if the related geometric problem is restrained by positive factorizations in a subgroup of the mapping class group, one can achieve uniform bounds on the topology of the fillings -- which are in addition asked to come from branched coverings o the $4$-ball in the above case. This raises a question that is interesting in its own:

\begin{question}
For which subgroups $N < \Gamma_g^n$ does $\h_N$ have finite, positive image? What is the geometric significance of such $N$?
\end{question}

\vspace{0.3in}

\end{document}